\def\d{{\rm d}}
\def\sech{\,{\rm sech}}
\def\t{{\mathfrak t}}
\def\g{{\mathfrak g}}
\def\R{{\bf R}}
\def\Z{{\bf Z}}
\def\ad#1{{\rm ad}_{#1}}
\def\Ad#1{{\rm Ad}_{#1}}
\def\vf{{\mathcal X}}
\def\ie{{\it i.e.\ }}
\def\im{{\rm{Im}\, }}
\def\rmt#1{{\rmfamily\tiny #1}}
\def\H{{\sf H}}
\newtheorem{theorem}{Theorem}[section]
\newtheorem{thm}{Theorem}[section]
\newtheorem{lemma}[thm]{Lemma}
\begin{document}

\title{Positive-Entropy Geodesic Flows on Nilmanifolds}
\author{Leo T. Butler, Vassili Gelfreich}
\date{\today}

\begin{abstract} Let $T_n$ be the nilpotent group of real $n
\times n$ upper-triangular matrices with $1$s on the diagonal. The
hamiltonian flow of a left-invariant hamiltonian on $T^*T_n$ naturally
reduces to the Euler flow on $\t_n^*$, the dual of $\t_n = {\rm
Lie}(T_n)$. This paper shows that the Euler flows of the standard
riemannian and subriemannian structures of $T_4$ have transverse
homoclinic points on all regular coadjoint orbits. As a corollary,
left-invariant riemannian metrics with positive topological entropy
are constructed on all quotients $D \backslash T_n$ where $D$ is a
discrete subgroup of $T_n$ and $n \geq 4$.
\end{abstract}

\keywords{Geodesic flows, entropy, nilmanifolds, nonintegrability,
subriemmannian geometry} 
%\classification{58F17, 53D25, 37D40}

\maketitle

\section{Introduction}
\label{int}

Let $\Sigma$ be a nilmanifold, \ie homogeneous space of a connected
nilpotent Lie group $G$. Each homogeneous riemannian metric on $G$
induces a locally-homogeneous metric on $\Sigma$. These riemannian
geometries, which will be called {\it left-invariant}, are of interest
in both geometry and dynamics. A basis question is
%% 
%% Let $G$ be a connected nilpotent Lie group, $D < G$ a discrete
%% subgroup and ${\bf g}$ a left-invariant metric on $G$. The manifold
%% $\Sigma = D \backslash G$ is a {\it nilmanifold}; the metric (and
%% geodesic flow) induced by ${\bf g}$ on $\Sigma$ is said to be
%% left-invariant. A basic question concerning the dynamics of a
%% left-invariant geodesic flow on $\Sigma$ is:
%% 

\medskip
\noindent
{\bf Question A}: {\it Which left-invariant geodesic flows on a
compact nilmanifold have zero topological entropy?}
\medskip

A mistaken answer to question A appears in Theorem 3
of~\cite{Manning}. In~\cite{Butler:2003a}, the first author showed
that on $2$-step nilmanifolds, all left-invariant geodesic flows have
zero entropy. In~\cite{Butler:2003b}, metrics on compact quotients of
the $3$-step nilpotent Lie group $T_4 \oplus T_3$ are constructed
whose geodesic flows have positive topological entropy. The paper also
speculated that the standard geodesic flow on $T_4$ also had such
horseshoes. Montgomery, Shapiro and Stolin~\cite{Montgomeryetal}
investigated the standard {\em subriemannian} geodesic flow on
$T_4$~\cite{Butler:2003a}; they showed that it reduces to the
Yang-Mills hamiltonian flow which is known to be algebraically
non-integrable~\cite{Zig1,Zig2}.

Let us state the first result of the present paper. The Lie algebra of
$T_4$, $\t_4$, has the standard basis consisting of those $4\times 4$
matrices $X_{ij}$ with a unit in the $i$-th row and $j$-th column, $i
< j$, and zeros everywhere else. A quadratic hamiltonian $h : \t_4^*
\to \R$ is {\em diagonal} if it is expressed as $h(p) = \sum_{i<j}
a_{ij} \langle p,X_{ij} \rangle^2$ for some constants $a_{ij}$. The
standard riemannian metric has $a_{ij}=1$ for all $i,j$; the standard
Carnot (subriemannian) metric has $a_{12}=a_{23}=a_{34}=1$ and all other
coefficients zero.

\begin{theorem} \label{thm:main}
If $h : \t_4^* \to \R$ is a diagonal hamiltonian with
$a_{12}a_{13}a_{23}a_{34} \neq 0$ and $a_{13}a_{34}=a_{12}a_{24}$,
then for all but at most countably many regular coadjoint orbits in
$\t_4^*$, the Euler vector field of $h$ has a horseshoe. In
particular, the Euler vector field of the standard riemannian metric
(resp. sub-riemannian metric with $a_{13} \neq 0$) is analytically
non-integrable.
\end{theorem}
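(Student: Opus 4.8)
\emph{Sketch of proof.} Write $p=(p_{12},p_{13},p_{14},p_{23},p_{24},p_{34})$ for the coordinates on $\t_4^*$ dual to the standard basis. A short computation with the structure constants shows that $p_{14}$ (spanning the annihilator of the centre of $\t_4$) and $C=p_{14}p_{23}-p_{13}p_{24}$ are two independent Casimirs of the Lie--Poisson bracket, and that the regular coadjoint orbits are the $4$-dimensional level sets $\mathcal O_{a,b}=\{p_{14}=a\neq0,\ C=b\}$. On $\mathcal O_{a,b}$ one has $\{p_{12},p_{24}\}=\{p_{13},p_{34}\}=a$ with all other brackets among $p_{12},p_{13},p_{24},p_{34}$ vanishing, so these four functions are (after rescaling by $a$) global Darboux coordinates, with $p_{23}=(b+p_{13}p_{24})/a$ eliminated; the Euler flow of $h$ on $\mathcal O_{a,b}$ thus becomes a two-degree-of-freedom Hamiltonian system. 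Regarding $u=p_{24}$ and $v=p_{13}$ as configuration variables one computes $\ddot u=-4a^2a_{12}a_{24}\,u-4a_{12}a_{23}(b+uv)v$ and $\ddot v=-4a^2a_{13}a_{34}\,v-4a_{23}a_{34}(b+uv)u$; the hypothesis $a_{13}a_{34}=a_{12}a_{24}$ equalises the linear coefficients and, after a linear rescaling of $u,v$, recasts these as Newtonian equations $\ddot u=-\partial_uV$, $\ddot v=-\partial_vV$ with quartic potential $V=\kappa(u^2+v^2)+\lambda(b+\mu uv)^2$, where $\mu>0$, $\lambda=a_{12}a_{23}\neq0$ by the hypothesis $a_{12}a_{13}a_{23}a_{34}\neq0$, and $\kappa\propto a^2$. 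Since the dilation $p\mapsto sp$, $t\mapsto s^{-1}t$ carries $\mathcal O_{a,b}$ to $\mathcal O_{sa,s^2b}$ and commutes with the flow, we may normalise $a=1$; the family is then parametrised by $\rho:=b$, and everything below is real-analytic in $\rho$.

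I would next produce, on each $\mathcal O_{1,\rho}$, a hyperbolic object carrying a homoclinic connection. In the rotated configuration coordinates $x=(u+v)/\sqrt2$, $y=(u-v)/\sqrt2$ the potential becomes $\kappa(x^2+y^2)+\lambda\mu\rho(x^2-y^2)+\tfrac14\lambda\mu^2(x^2-y^2)^2+\mathrm{const}$, so $\{y=0,\ \dot y=0\}$ and $\{x=0,\ \dot x=0\}$ are invariant planes, each carrying a one-degree-of-freedom Duffing oscillator. When $|\rho|$ is large one of these planar Duffings is a double well, so the origin is a saddle--centre of the full system with a figure-eight separatrix lying in the plane, and for energies just above that of the origin the stable and unstable manifolds of the neighbouring Lyapunov periodic orbits hug this separatrix. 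For the remaining $\rho$ the planar Duffing is a centre, but the variational equation transverse to a high-energy planar periodic orbit is a Hill equation with sign-changing potential, hence has non-trivial intervals of parametric instability; at an energy in such an interval the planar periodic orbit is hyperbolic inside its energy level, and again its two-dimensional stable and unstable manifolds hug the separatrix of the frozen normal system. In all cases the problem is reduced to measuring the splitting of two two-dimensional invariant manifolds inside a three-dimensional energy level.

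The splitting is detected by a Poincar\'e--Melnikov integral of the symplectic pairing of the two manifolds along the unperturbed separatrix, the entire integrand being carried by the coupling term $\tfrac14\lambda\mu^2(x^2-y^2)^2$ --- equivalently by the four nonzero coefficients $a_{12},a_{13},a_{23},a_{34}$. If this Melnikov function does not vanish identically in $\rho$, then, being real-analytic, its zero set is at most countable, and for every other $\rho$ the two manifolds meet transversally, yielding a transverse homoclinic point on $\mathcal O_{1,\rho}$. \textbf{Proving the non-vanishing of the Melnikov function is the crux of the argument.} The separatrix is explicit (hyperbolic secants in the double-well case), so in a non-resonant regime the integral is evaluated by residues; but the scalings forced on us here typically bring the transverse oscillation into near-resonance with the separatrix time scale, in which case the naive Melnikov expansion can vanish term by term and one must instead continue the separatrix to complex time, locate its nearest singularities, and show that the associated Stokes constant --- a convergent series built from the coupling coefficients --- is nonzero; this exponentially-small-splitting analysis in the spirit of Lazutkin's work is the technical heart of the paper.

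Granting this, the Smale--Birkhoff homoclinic theorem produces an invariant hyperbolic horseshoe for the Euler flow on $\mathcal O_{1,\rho}$ for all but countably many $\rho$, hence on all but countably many regular coadjoint orbits. The Hamiltonian flow of $h$ on $T^*T_4$ factors onto the Euler flow on $\t_4^*$, so it also carries a horseshoe and has positive topological entropy; since a real-analytic Hamiltonian flow that is completely integrable by real-analytic first integrals has zero topological entropy, the Euler vector field of $h$ is analytically non-integrable. Finally the two named metrics satisfy the hypotheses: the standard riemannian metric has all $a_{ij}=1$, so $a_{13}a_{34}=1=a_{12}a_{24}$ and $a_{12}a_{13}a_{23}a_{34}=1\neq0$; the Carnot metric modified to have $a_{13}\neq0$ has $a_{12}=a_{23}=a_{34}=1$ and is forced by $a_{13}a_{34}=a_{12}a_{24}$ to take $a_{24}=a_{13}$, whence $a_{12}a_{13}a_{23}a_{34}=a_{13}\neq0$. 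Thus Theorem~\ref{thm:main} applies to each.
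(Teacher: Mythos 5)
Your reduction is essentially the paper's: the Casimirs $p_w$ and $p_wp_y-p_zp_u$, the Darboux coordinates on a regular orbit (Lemma \ref{lem:ok}), the two-degree-of-freedom quartic Hamiltonian, the rotation producing the $(X^2-Y^2)^2$ coupling (equation \ref{eq:h1}), and the plan ``analytic in the orbit parameter, nonzero somewhere, hence at most countably many bad orbits'' all match. But the proposal has a genuine gap exactly where you flag it, and the gap is twofold. First, you never produce a small parameter: the quartic coupling $\tfrac14\lambda\mu^2(x^2-y^2)^2$ is not small, so a ``Poincar\'e--Melnikov integral of the symplectic pairing'' has no first-order validity as stated, and you are driven to speculate about exponentially small splittings and Stokes constants --- an analysis that is neither carried out nor needed. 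The paper's key device (Lemma \ref{lem:h3}) is the scaling $h_{\epsilon}(x,X,y,Y)=(x,X,\sqrt{\epsilon}y,\sqrt{\epsilon}Y)$, which \emph{conjugates} the flow for every $\epsilon>0$ and exhibits the system as an honest $O(\epsilon)$ perturbation of a product of the Duffing oscillator with a linear equation $\ddot Y+[\alpha^2-2\sech^2 t]Y=0$; the splitting is then measured relative to the normally hyperbolic plane $S=\{x=X=0\}$ (valid for \emph{all} orbit parameters, so your case split between double-well and parametric-instability regimes in $\rho$ is unnecessary), following Holmes--Marsden/Robinson (Theorem \ref{thm:melint}). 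There is no near-resonance degeneration here because $\epsilon$ and the frequency $\alpha$ are independent.

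Second, the non-vanishing of the Melnikov function --- which you correctly identify as the crux --- is resolved in the paper by an elementary argument you do not supply: the integral factors as $c_0c_1$ times
\[
I(\alpha)=\int_0^{\infty}\dot q(t)\,z_0(t)\,z_1(t)\,\d t,\qquad q=2\sech^2 t,
\]
over the explicit Duffing separatrix; integration by parts (Lemmas \ref{lem:zero} and \ref{lem:I0}) converts $I$ into the boundary term $-\lim_{t\to\infty}[\dot z_0\dot z_1+\alpha^2z_0z_1]=W\alpha\cot B$; $I$ is holomorphic in $\alpha$ on a strip, and $I(0)\neq0$ because at $\alpha=0$ the even and odd solutions do not change sign while $\dot q$ has a fixed sign (Lemma \ref{lem:nov}). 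Since $\alpha^2=1+2c\nu^{1/3}$ varies real-analytically and non-constantly over the regular orbits precisely when $a_{13}\neq0$, this yields at most countably many exceptional orbits --- and explains why the hypothesis $a_{13}\neq0$ appears at all, a point your proposal never accounts for (when $a_{13}=0$ one has $\alpha\equiv1$ and the paper must resort to the numerics of Section \ref{sec:deg}). Your closing verification of the hypotheses for the two named metrics is fine.
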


\medskip
The condition that $a_{13}a_{34}=a_{12}a_{24}$ is only a device to
simplify the proof: all nearby hamiltonians also have a horseshoe.  We
also show, by means of a numerical computation of an integral (see
Table \ref{tab:1} in section \ref{sec:deg}), that when $a_{13}=0$,
the conclusions of Theorem \ref{thm:main} hold. This shows that the
subriemannian geodesic flow of Montgomery, Shapiro and Stolin is
real-analytically non-integrable. Related numerical computations
(figure \ref{fig:1} in section \ref{sec:deg}) suggest that the
hamiltonian $h$ has a horseshoe on {\em every} regular coadjoint orbit
provided only that $a_{12}a_{23}a_{34} \neq 0$ and
$a_{13}a_{34}=a_{12}a_{24}$. Let us formulate a corollary to Theorem
\ref{thm:main}: Let $D < T_n$ be a discrete subgroup of $T_n$, $\Sigma
= D \backslash T_n$ and $S\Sigma$ is the unit sphere bundle.

\medskip
\begin{theorem} \label{thm:cor1}
If $n \geq 4$, then there is a left-invariant geodesic flow $\phi_t :
S\Sigma \to S\Sigma$ such that $h_{top}(\phi_1) > 0$.
\end{theorem}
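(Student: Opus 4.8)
\emph{Proof plan.} The plan is to exhibit the $\t_4$-horseshoe of Theorem~\ref{thm:main} inside the geodesic flow on $S\Sigma$ for a suitable block-diagonal left-invariant riemannian metric, and then to read off positive entropy from two soft facts: the topological entropy of a flow restricted to a compact invariant subset is at most that of the whole flow, and topological entropy does not increase under a topological factor map.

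First I would realise $\t_4$ as a complemented subalgebra (equivalently, a quotient algebra) of $\t_n$ for every $n\ge4$. Put $\mathfrak{i}={\rm span}\{X_{jk}:1\le j<k\le n,\ k\ge 5\}$ and $\mathfrak{m}={\rm span}\{X_{jk}:1\le j<k\le 4\}$. A short computation with the relations $[X_{jk},X_{lm}]=\delta_{kl}X_{jm}-\delta_{mj}X_{lk}$ shows that $\mathfrak{i}$ is an ideal of $\t_n$, that $\mathfrak{m}$ is a subalgebra isomorphic to $\t_4$, that $\t_n=\mathfrak{i}\oplus\mathfrak{m}$, and that $[\mathfrak{m},\mathfrak{i}]\subseteq\mathfrak{i}$; when $n=4$ one has $\mathfrak{i}=0$. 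Identifying $\t_4^*$ with ${\rm ann}(\mathfrak{i})\subset\t_n^*$, I would fix on it the standard diagonal hamiltonian $h_0$ (with $a_{ij}=1$ for all $i<j$; the hypotheses $a_{12}a_{13}a_{23}a_{34}\neq 0$ and $a_{13}a_{34}=a_{12}a_{24}$ of Theorem~\ref{thm:main} then hold), choose any positive-definite quadratic form $h_1$ on ${\rm ann}(\mathfrak{m})$, and take $h=h_0\oplus h_1$ for the decomposition $\t_n^*={\rm ann}(\mathfrak{i})\oplus{\rm ann}(\mathfrak{m})$. Then $h$ is the co-metric of a left-invariant \emph{riemannian} metric on $T_n$, so by the reduction described in the introduction the geodesic flow on $T^*T_n$ projects, in the left trivialisation $T^*T_n\cong T_n\times\t_n^*$, to the flow $\psi_t$ of the Euler vector field $\vf_h$ of $h$.

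Next I would check that $\vf_h$ is tangent to ${\rm ann}(\mathfrak{i})$ and restricts there to the $\t_4$-Euler flow: if $p\in{\rm ann}(\mathfrak{i})$ then $dh(p)=dh_0(p)\in\mathfrak{m}$, so $\langle{\rm ad}^*_{dh(p)}p,Y\rangle=\langle p,[Y,dh(p)]\rangle=0$ for all $Y\in\mathfrak{i}$ because $[\mathfrak{i},\mathfrak{m}]\subseteq\mathfrak{i}$; and since $[\mathfrak{m},\mathfrak{m}]\subseteq\mathfrak{m}$ and $h|_{{\rm ann}(\mathfrak{i})}=h_0$, under ${\rm ann}(\mathfrak{i})\cong\t_4^*$ the restriction of $\vf_h$ is precisely the Euler vector field of $h_0$. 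Hence Theorem~\ref{thm:main} produces a regular coadjoint orbit of $\t_4^*$ carrying a horseshoe $\Lambda$, which we now view as a compact $\psi_t$-invariant subset of $\t_n^*$; being a horseshoe $\Lambda$ is topologically transitive, so the $\psi_t$-invariant function $h$ is a constant on $\Lambda$, necessarily positive, and after multiplying the metric by a positive constant we may assume $h|_\Lambda=1/2$. In the left trivialisation the cotangent lift of left translation by $d\in D$ is $(g,\mu)\mapsto(dg,\mu)$, whence $T^*\Sigma=\Sigma\times\t_n^*$ and, with this normalisation, $S\Sigma=\Sigma\times E$ with $E=\{h=1/2\}$ an ellipsoid containing $\Lambda$. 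The geodesic flow $\phi_t$ on $S\Sigma$ has $\t_n^*$-component $\psi_t$, hence preserves the compact set $\Sigma\times\Lambda$, and the projection $\Sigma\times\Lambda\to\Lambda$ is a topological factor map from $\phi_t|_{\Sigma\times\Lambda}$ onto $\psi_t|_\Lambda$. As $\Sigma$ is compact, all spaces here are compact, so
\[
h_{top}(\phi_1)\ \ge\ h_{top}\bigl(\phi_1|_{\Sigma\times\Lambda}\bigr)\ \ge\ h_{top}\bigl(\psi_1|_\Lambda\bigr)\ >\ 0 ,
\]
the last inequality because $\Lambda$ is a horseshoe; this is the assertion of Theorem~\ref{thm:cor1}.

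The hard part lies entirely in Theorem~\ref{thm:main}: granting it, the only genuinely structural ingredient of the corollary is the remark that $\t_4$ is a complemented subalgebra of $\t_n$ for all $n\ge4$, which makes the $\t_4$-dynamics occur verbatim inside the $\t_n$-dynamics; the rest is bookkeeping with the left trivialisation and with the two standard monotonicity properties of topological entropy. The one extra point to watch is that the block-diagonal $h=h_0\oplus h_1$ is the co-metric of a genuine positive-definite left-invariant riemannian metric on $T_n$ (not merely subriemannian), so that $S\Sigma$ is an honest unit sphere bundle with compact fibre $E$.
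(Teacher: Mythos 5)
Your proposal is correct and coincides with the argument the paper intends: Theorem~\ref{thm:cor1} is stated as a corollary with no written proof, and the implicit route is exactly yours --- realise $\t_4$ as the quotient of $\t_n$ by the ideal spanned by the $X_{jk}$ with $k\ge 5$, so that $\t_4^*$ sits inside $\t_n^*$ as a Poisson submanifold invariant under the Euler field of a block-diagonal positive-definite co-metric, and then combine the projection $r_o:T^*\Sigma\to\t_n^*$ from Section~2 with monotonicity of topological entropy under restriction to invariant sets and under factor maps. The only blemish is the clause ``as $\Sigma$ is compact'': the theorem assumes only that $D$ is discrete, not cocompact, so you should either restrict to lattices or observe that both entropy inequalities you invoke persist for Bowen's entropy on noncompact spaces (separated sets in an invariant subset are separated in the whole space, and a uniformly continuous surjection onto the compact factor $\Lambda$ cannot increase entropy), after which the conclusion stands unchanged.
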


\medskip
It appears likely that {\em all} left-invariant geodesic flows on
$S\Sigma$ have positive topological entropy and are non-integrable
with smooth integrals. 

\medskip

Theorem \ref{thm:cor1} is interesting from a riemannian point of
view. Let $(M,g)$ be a smooth ($C^{\infty}$) riemannian manifold, and
$\phi_t : SM \to SM$ the geodesic flow of $g$. For each $T > 0$ and
$p,q \in M$, let $n_T(p,q)$ denote the number of distinct geodesics of
length no more than $T$ that join $p$ to $q$. Ma\~{n}\'e~\cite{Man}
showed that if $M$ is compact then $h_{top}(\phi_1 | SM) = \lim_{T \to
\infty} T^{-1} \log \int_{M \times M}\ n_T(p,q)\, dpdq$. Thus, for the
geodesic flows constructed here, for generic points $p$ and $q$ on a
compact quotient of $T_n$, $n_T(p,q)$ grows exponentially fast. In
constrast, Karidi showed that the volume growth on the universal cover
of these manifolds is polynomial of degree
$\frac{1}{6}n(n^2-1)$~\cite{Kar}.

Theorem \ref{thm:main} is proved by reducing the hamiltonian flow of
$\phi_t$ on $T^* \Sigma$ to a hamiltonian flow on the coadjoint orbits
of ${\rm Lie(}T_4{\rm )}^*$. In the appropriate coordinate system, the
reduced hamiltonian is a small perturbation of a hamiltonian on $\R^4$
that is the sum of an unforced Duffing hamiltonian and a forced linear
system whose solutions can be expressed in terms of Legendre
functions. The Poincar\'e-Melnikov technique developed
in~\cite{MarsdenHolmes:1983a,Robinson:1988a} for autonomous
Hamiltonian systems is adapted here to show that a suspended Smale
horseshoe appears in the perturbed hamiltonian flow.

\section{The construction on Lie($T_4$)${}^*$}

In this section, we will first recall a number of key facts about
geodesic flows and left-invariant hamiltonian systems on the cotangent
bundle of a Lie group; for more details, see~\cite{GS}. We will
then reduce the equations of motion of a left-invariant geodesic flow
on $T^* T_4$ to the equations of motion of a hamiltonian system on
$T^*\R^2$.

\subsection{Poisson geometry of left-invariant hamiltonians}

A {\em Poisson manifold} is a smooth manifold $M$ such
that $C^{\infty}(M)$ is equipped with a skew-symmetric bracket $\{,\}$
that makes $( C^{\infty}(M), \{,\} )$ into a Lie algebra of
derivations of $C^{\infty}(M)$. The centre of $( C^{\infty}(M), \{,\}
)$ is traditionally called the set of {\em Casimirs}. If $f$ is a
Casimir then $X_f \equiv 0$ and $f$ is a first integral of all
hamiltonian vector fields. If the set of Casimirs of $( C^{\infty}(M),
\{,\} )$ are the constant functions, then we say that $(
C^{\infty}(M), \{,\} )$ is a {\em symplectic} manifold. In this case,
the Poisson bracket naturally induces a closed, non-degenerate skew
$2$-form on $M$ which is called a symplectic structure. We will say
that a smooth map $f : M \to N$ is a {\em Poisson map} if $f^* : (
C^{\infty}(N), \{,\}_N ) \to ( C^{\infty}(M), \{,\}_M )$ is a Lie
algebra homomorphism.

The most basic example of a Poisson manifold that is also symplectic
is provided by $T^* \R = \{ (a,A)\ : a,A \in \R\}$ equipped with the
Poisson bracket satisfying $\{a,A\}_{T^* \R} = 1$.

The dual space of a Lie algebra gives an example of a Poisson manifold
that is not (in general) a symplectic manifold. Let $\g$ be a
finite-dimensional real Lie algebra and let $\g^*$ be the dual vector
space of $\g$. $T^*_p \g^*$ is identified with $\g$ for all $p \in
\g^*$. The Poisson bracket on $\g^*$ is defined for all $f,h \in
C^{\infty}(\g^*)$ and $p \in \g^*$ by
\begin{equation} \label{eq:pb}
\{f,h\}(p) := - \langle p, [ df_p, dh_p ] \rangle,
\end{equation}
where $\langle \cdot,\cdot \rangle : \g^* \times \g \to \R$ is the
natural pairing. Recall that for $\xi \in \g$, $\ad{\xi}^* : \g^* \to
\g^*$ is the linear map defined by $\langle \ad{\xi}^*p, \eta \rangle
= -\langle p, [\xi,\eta] \rangle$. $\ad{}^* : \g \to gl(\g^*)$ is the
representation contragredient to the adjoint representation. For any
$h \in C^{\infty}(\g^*)$, the hamiltonian vector field $E_h = \{\cdot
, h\}$ equals $-\ad{dh_p}^* p$. The standard example of a hamiltonian
vector field is obtained from a positive-definite linear map $\phi :
\g^* \to \g$ by setting $h(p) = \frac{1}{2} \langle p, \phi(p)
\rangle$, in which case $E_h(p) = -\ad{\phi(p)}^* p$.

Let $G$ be a connected Lie group whose Lie algebra is $\g$. The
adjoint representation of $G$ on $\g$, $\Ad{g}\xi =
\frac{d}{dt}|_{t=0}\, g \exp(t\xi) g^{-1}$, induces the coadjoint
representation $\langle \Ad{g}^* p, \xi \rangle = \langle p,
\Ad{g^{-1}} \xi \rangle$ for all $p \in \g^*$, $g \in G$ and $\xi \in
\g$. As each vector field $p \to \ad{\xi}^* p$ is hamiltonian on
$\g^*$, with linear hamiltonian $h_{\xi}(p) = - \langle p, \xi
\rangle$, the coadjoint action of $G$ on $\g^*$ preserves the Poisson
bracket. The orbits of the coadjoint action are called the coadjoint
orbits. Each coadjoint orbit is a homogeneous $G$-space, and {\em
every} hamiltonian vector field on $\g^*$ is tangent to each coadjoint
orbit. For this reason, the Poisson bracket $\{,\}_{\g^*}$ restricts
to each coadjoint orbit, and is non-degenerate on each coadjoint
orbit. Thus, the coadjoint orbits are naturally symplectic
manifolds. A Casimir is necessarily constant on each coadjoint orbit,
and in many cases (as in this paper) each coadjoint orbit is the
common level set of all Casimirs.

The Poisson bracket on $\g^*$ also arises in a natural way from the
Poisson bracket on $T^* G$. The group $G$ acts from the left on $T^*
G$, and this action preserves the Poisson structure
$\{,\}_{T^*G}$. The set of smooth left-invariant functions
$C^{\infty}(T^* G)^G$ is therefore a Lie subalgebra of $C^{\infty}(T^*
G)$ with respect to $\{,\}_{T^*G}$. This subalgebra is naturally
identified with $( C^{\infty}(\g^*), \{,\}_{\g^*})$ as follows: the
left-trivialization of $T^* G = G \times \g^*$ induces the projection
map $r : T^* G \to \g^*$ onto the second factor; $r^* C^{\infty}(\g^*)
= C^{\infty}(T^* G)^G$ and $r^*$ is a Lie algebra monomorphism.

The hamiltonian flow of a left-invariant hamiltonian $H$ on $T^* G$
therefore has the equations of motion:
\begin{equation}
X_H(g,p) = \left\{ 
\begin{array}{ccc} 
\dot{g} &=& T_e L_g dh(p),\\
\dot{p} &=& -\ad{dh(p)}^*\ p,
\end{array}
\right.
\end{equation}
Note that $\d r(X_H) = E_h$, where $h \in C^{\infty}(\g^*)$ satisfies
$r^* h = H$. The vector field $E_h$ is called the {\em Euler vector
field}. It is a natural reduction of $X_H$ by $G$. If $h(p) =
\frac{1}{2} \langle p, \phi(p) \rangle$ for a positive-definite linear
map $\phi : \g^* \to \g$ then $H$ is induced by a left-invariant
metric on $T^* G$ and $X_H$ is the geodesic vector field.

Finally, if $D < G$ is a discrete subgroup, then $T^* \Sigma = \Sigma
\times \g^*$ where $\Sigma = D \backslash G$. The projection map $r :
T^* G \to \g^*$ is naturally left-invariant, so it factors through to
a map $r_o : T^* \Sigma \to \g^*$. If $H_o = r_o^*h$ for some $h \in
C^{\infty}(\g^*)$ then $Tr_o(X_{H_o}) = E_h$. Thus, the hamiltonian
flow of a left-invariant hamiltonian on $T^* \Sigma$ always projects
to a hamiltonian flow on $\g^*$.

\subsection{Poisson geometry of $T^*T_4$}

Let $\t_4$ denote the Lie algebra of $T_4$, so
$$
\t_4 = \left\{ {
\left[ \begin{array}{cccc}
0 & x & z & w\\
0 & 0 & y & u\\
0 & 0 & 0 & v\\
0 & 0 & 0 & 0
\end{array} \right] } 
\ :\ u,v,w,x,y,z \in \R \right\}.
$$ For each $a \in \{u,v,w,x,y,z\}$, let $A \in \t_4$ be the
element obtained by setting $a$ equal to one and all other
coefficients equal to zero. Then $\{U,V,W,X,Y,Z\}$ is a basis of
$\t_4$ whose commutation relations given by: $[X,Y]=Z$,$[Y,V]=U$,
$[X,U]=W$, $[Z,V]=W$, and all others are trivial or obtained by
skew-symmetry. 

Let $p_{a} : \g^* \to \R{}$ be the linear function given by $p_{a}(p)
= \langle p, A \rangle$ for $A \in \g$ and all $p \in \g^*$. From the
definition of the Poisson bracket on $\t_4^*$ ({\em
c.f.}~Eq.~\ref{eq:pb}), along with the commutation relations, we
conclude that: $$\{p_x,p_y\}=-p_z,\ \{p_y,p_v\}=-p_u,\
\{p_x,p_u\}=-p_w,\ \{p_z,p_v\}=-p_w.$$

There are two independent Casimirs of $\t_4^*$ are $K_1(p) = p_w$, $K_2(p) =
p_w p_y - p_z p_u$. Let $K : \t_4^* \to \R^2$ be
defined by $K = (K_1,K_2)$. The level sets of $K$ are the
coadjoint orbits of $T_4$'s action on $\t_4^*$ and will be denoted by
${\mathcal O}_{k}$, where $k=(k_1,k_2)$. We will say that
${\mathcal O}_{k}$ is a {\em regular coadjoint orbit} if $k_1 k_2
\neq 0$.

\begin{lemma}  \label{lem:ok}
Let ${\mathcal O}_k$ be a regular coadjoint orbit. Then ${\mathcal
O}_{k}$ is symplectomorphic to $(T^* \R)^{2}$ equipped with its
canonical symplectic structure.
\end{lemma}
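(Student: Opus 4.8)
The plan is to find explicit Darboux-type coordinates on a regular coadjoint orbit $\mathcal{O}_k$. Since $K_1 = p_w$ is a Casimir and $p_w \equiv k_1 \neq 0$ on the orbit, we may use $k_1^{-1}$ to invert the Poisson relations involving $p_w$. First I would compute the brackets among the remaining linear functions $p_u, p_v, p_x, p_y, p_z$ restricted to $\mathcal{O}_k$: from the commutation relations we have $\{p_x,p_u\} = -p_w = -k_1$ and $\{p_z,p_v\} = -p_w = -k_1$, so the pairs $(p_x, p_u)$ and $(p_v, p_z)$ already look like conjugate pairs up to the constant $k_1$. After rescaling, say setting $a_1 = p_x$, $A_1 = -k_1^{-1} p_u$ (so that $\{a_1, A_1\} = 1$) and similarly $a_2 = p_v$, $A_2 = -k_1^{-1} p_z$, I would check that these four functions have the canonical $(T^*\mathbb{R})^2$ bracket relations among themselves — in particular that the cross-brackets $\{a_1, a_2\}$, $\{a_1, A_2\}$, etc., all vanish on the orbit. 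The brackets $\{p_x, p_v\} = 0$ and $\{p_x, p_z\} = -\{p_z, p_x\}$; here one must use the second Casimir $K_2 = p_w p_y - p_z p_u = k_2$ to express $p_y$ in terms of the other coordinates, namely $p_y = k_1^{-1}(k_2 + p_z p_u)$, and then verify that $\{p_x, p_z\}$ and the like reduce to zero modulo this relation. The map $(a_1, A_1, a_2, A_2) : \mathcal{O}_k \to (T^*\mathbb{R})^2$ is then a Poisson map, and since both sides are $4$-dimensional symplectic manifolds and the map is a local diffeomorphism with the bracket matching, it is a local symplectomorphism.

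The remaining point is to show this map is a global diffeomorphism — injective and surjective onto $(T^*\mathbb{R})^2 \cong \mathbb{R}^4$. For surjectivity, given any target values $(a_1, A_1, a_2, A_2) \in \mathbb{R}^4$, I would produce a point of $\t_4^*$ lying on $\mathcal{O}_k$ with those coordinate values: set $p_w = k_1$, $p_x = a_1$, $p_u = -k_1 A_1$, $p_v = a_2$, $p_z = -k_1 A_2$, and finally $p_y = k_1^{-1}(k_2 + p_z p_u)$, which is forced and well-defined since $k_1 \neq 0$. One checks directly that such a $p$ satisfies $K_1(p) = k_1$ and $K_2(p) = k_2$. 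For injectivity, the same formulas show that the six components $p_u, p_v, p_w, p_x, p_y, p_z$ of a point on $\mathcal{O}_k$ are uniquely determined by $(a_1, A_1, a_2, A_2)$ together with the fixed values $(k_1, k_2)$. Hence the map is a bijection, and being a Poisson local diffeomorphism between connected symplectic manifolds of equal dimension, it is the desired symplectomorphism.

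The main obstacle I anticipate is the bookkeeping in verifying that the cross-brackets vanish: the Poisson tensor on $\t_4^*$ is not obviously in block form, and the relations $\{p_z, p_v\} = -p_w$ couple the ``$z$'' and ``$v$'' directions in a way that interacts with the ``$x$''–``$u$'' pair through the Jacobi identity and the constraint $p_y = k_1^{-1}(k_2 + p_z p_u)$. One must be careful that $p_y$ is treated as a \emph{dependent} variable on the orbit, so that brackets involving $p_y$ must be re-expanded using the constraint before comparing with the canonical relations; a naive computation that forgets this will produce spurious nonzero cross terms. Once the correct substitution $p_y \mapsto k_1^{-1}(k_2 + p_z p_u)$ is consistently applied, I expect all cross-brackets to collapse to zero and the two pairs $(a_1,A_1)$, $(a_2,A_2)$ to decouple cleanly into the standard $(T^*\mathbb{R})^2$ structure.
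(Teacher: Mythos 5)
Your proposal is correct and follows essentially the same route as the paper: rescale the linear functions $p_x,p_u,p_v,p_z$ into two canonically conjugate pairs, use the Casimirs to recover $p_y$ and $p_w$ and thereby build an explicit global inverse, and verify the canonical bracket relations. Two minor points: with your sign choices $\{a_2,A_2\}=\{p_v,-k_1^{-1}p_z\}=-k_1^{-1}p_w=-1$ rather than $+1$ (fixed by flipping one sign), and the cross-bracket verification needs no appeal to the constraint $p_y=k_1^{-1}(k_2+p_zp_u)$, since your four coordinate functions do not involve $p_y$ and every commutator among $X,U,V,Z$ other than $[X,U]=[Z,V]=W$ vanishes in $\t_4$, so the cross-brackets are identically zero in the ambient Poisson algebra.
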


\begin{proof} 
The Poisson bracket on $\t_4^*$ restricts to ${\mathcal O}_k$. We will
denote the restricted bracket by $\{,\}_k$. Let $(T^* \R)^{2} = \{
(a,A,b,B)\ :\ a,A,b,B \in \R \}$. The canonical Poisson bracket, which
we denote by $[,]$, satisfies $[a,A]=1$, $[b,B]=1$ and all other
brackets are zero.

Let $\lambda$ and $\mu$ be two non-zero parameters (the parameters are
included because we will further transform coordinate systems). The
map $f_{k}(p) = (a,A,b,B)$ defined by
$$a= -\lambda p_x,\ A=(k_1\lambda)^{-1} p_u,\ b= - \mu p_v,\ B=
(k_1\mu)^{-1} p_z
$$
is a diffeomorphism of ${\mathcal O}_{k}$ onto $T^*\R^2$. Indeed,
$f_k$ is clearly smooth. And $g_k(a,A,b,B) = (p_u, \ldots, p_z)$
defined by 
$$p_v = -\mu^{-1}b,\ p_w = k_1,\ p_x = -\lambda^{-1}a,\ p_y =
(k_2+k_1^2 \lambda \mu AB)/k_1,\ {\rm and\ } p_z = k_1 \mu B,$$
satisfies $K \circ g_k = k$ and $f_k \circ g_k = id$, $g_k \circ f_k =
id$. Since $g_k$ is an algebraic map, it is smooth, and so we see
${\mathcal O}_k$ is diffeomorphic to $T^* \R^2$.

The commutation relations for the Poisson bracket on $\t_4^*$ allow us
to compute that $\{f_k^* a, f_k^* A\}_{k} = \{f_k^* b, f_k^* B\}_{k} =
1$ and all other Poisson brackets are zero. It follows that $f_k^* : (
C^{\infty}( T^* \R^2) , [,] ) \to ( C^{\infty}({\mathcal O}_k),
\{,\}_{k})$ is a Lie algebra isomorphism. Hence $f_k : {\mathcal O}_k
\to T^* \R^2$ is a symplectomorphism.
\end{proof}

\subsection{The hamiltonians}

Let $a_{ij} > 0$ be constants such that $a_{13}a_{34}=a_{12}a_{24}$ and let
\begin{equation} \label{eq:h}
4H(p) = a_{12} p_x^2 + a_{23} p_y^2 + a_{13} p_z^2 + a_{24} p_u^2 +
a_{34} p_v^2 + a_{14} p_w^2
\end{equation}
Since the vector field $E_{H}$ is unaffected by the addition of a
Casimir, the term $a_{14} p_w^2$ can be ignored. 

Let us introduce a symplectic change of variables on $T^* \R^2$: $A =
\frac{1}{\sqrt{2}} (X-Y)$, $B = \frac{1}{\sqrt{2}} (X+Y)$, $a =
\frac{1}{\sqrt{2}} (x-y)$, $b = \frac{1}{\sqrt{2}} (x+y)$, $z = c$ and
$Z = C$. Because $a_{13}a_{34}=a_{12}a_{24}$, there exists unique
$\lambda,\mu > 0$ so that $0 = a_{34} \mu^{-2} - a_{12} \lambda^{-2} =
a_{13} \mu^2 - a_{24} \lambda^2$, and $a_{12} \lambda^{-2} + a_{34}
\mu^{-2} = 1$. Indeed, we can choose $\lambda^2 = 2a_{12}$ and $\mu^2
= 2a_{34}$. Then:
\begin{equation} \label{eq:h1}
2{\bf H}_{k} = (x^2 - \xi X^2 + \nu X^4) + (y^2 + \omega Y^2 + \nu
Y^4 - 2 \nu X^2 Y^2), 
\end{equation}
where $\xi = - ( a_{13} a_{34} k_1^2 + a_{23} k_2 \sqrt{a_{12} a_{34}})
$, $\omega = \xi + 2 a_{13} a_{34} k_1^2 = a_{13} a_{34} k_1^2 - a_{23} k_2
\sqrt{a_{12} a_{34}}$ and $\nu = a_{12} a_{23} a_{34} k_1^2$. Note that we
can write $\omega=\xi + 2c\nu$ where $c= \frac{a_{13}}{a_{12}a_{23}}$.

\begin{lemma} \label{lem:h2}
The Euler vector field of $H$ on the regular coadjoint orbit
${\mathcal O}_k$ (equation \ref{eq:h}) is a time change of the hamiltonian
vector field of 
\begin{equation} \label{eq:h2}
2{\bf H} = x^2 + \left(X^2-\frac{1}{2}\right)^2 + y^2 + \alpha^2 Y^2 +
Y^4 - 2X^2Y^2
\end{equation}
on $T^* \R^2$, where $\alpha^2 = 1 + 2c\nu^{\frac{1}{3}}$.
\end{lemma}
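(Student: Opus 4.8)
The plan is to reduce $E_H$ to a two-degree-of-freedom hamiltonian on $T^{*}\R^{2}$ and then to put that hamiltonian into the normal form \ref{eq:h2} by symplectic changes of variables and a time change.

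First I would reduce. By the Poisson geometry recalled above, $E_H$ is tangent to every coadjoint orbit, so on a regular orbit ${\mathcal O}_k$ it coincides with the hamiltonian vector field of $H|_{{\mathcal O}_k}$ for the restricted bracket $\{,\}_k$, and Lemma~\ref{lem:ok} realises $({\mathcal O}_k,\{,\}_k)$ as $T^{*}\R^{2}$ with its canonical structure via the symplectomorphism $f_k$. Transporting $H|_{{\mathcal O}_k}$ along $f_k$ — substituting the explicit formulae for $g_k=f_k^{-1}$, discarding the term $a_{14}p_w^{2}$ (which equals the constant $a_{14}k_1^{2}$ on ${\mathcal O}_k$ and is therefore a Casimir of the reduced flow), and performing the linear symplectic change $(a,A,b,B)\mapsto(x,X,y,Y)$ — gives the hamiltonian \ref{eq:h1}. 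The hypothesis $a_{13}a_{34}=a_{12}a_{24}$ is used precisely here: with the choice $\lambda^{2}=2a_{12}$, $\mu^{2}=2a_{34}$ it forces the $A^{2}$-coefficient coming from $a_{24}p_u^{2}$ to equal the $B^{2}$-coefficient coming from $a_{13}p_z^{2}$, so that after the rotation this part of the hamiltonian is a multiple of $A^{2}+B^{2}=X^{2}+Y^{2}$ carrying no $XY$ term; that is what produces the tidy combination $\nu(X^{4}+Y^{4}-2X^{2}Y^{2})=\nu(X^{2}-Y^{2})^{2}$ alongside a diagonal quadratic part.

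To finish I would normalise \ref{eq:h1}. Since $E_h$ is unchanged by adding a Casimir to $h$ and by rescaling $h$ by a positive constant (the latter being a time change), I am free to conjugate the hamiltonian vector field of \ref{eq:h1} by a conformally symplectic dilation $(x,X,y,Y)\mapsto(\sigma x,\tau X,\sigma y,\tau Y)$, to apply a preliminary dilation adjusting $\xi$ against $\nu$, and to rescale the hamiltonian. I would arrange the scalings so that the quartic term becomes monic and the Duffing part $x^{2}-\xi X^{2}+\nu X^{4}$ becomes $x^{2}+(X^{2}-\tfrac12)^{2}$ up to an additive constant. The numerical content is that in $(X^{2}-\tfrac12)^{2}$ the quadratic coefficient is fixed by the quartic one, so normalising the quartic already determines the dilation of $X$ — a factor of order $\nu^{-1/6}$ — and feeding this into the coefficient of $Y^{2}$, using the identity $\omega=\xi+2c\nu$, produces $\alpha^{2}=1+2c\nu^{1/3}$.

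The hard part will be this last normalisation: showing that the dilations and time change available can be chosen to make the quartic term monic and complete the square in the $X$-variables at the same time, which is exactly the mechanism that ties $\alpha$ to $\nu^{1/3}$. The first two steps are routine Poisson-geometric bookkeeping and a single substitution.
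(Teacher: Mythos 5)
Your route is the paper's route: the reduction via $f_k$, the discarding of the Casimir $a_{14}p_w^2$, and the $45^\circ$ rotation producing \ref{eq:h1} are carried out in the text before the lemma, and the paper's entire proof is then the one-line symplectic dilation $g_{\nu}(x,X,y,Y)=(a x,a^{-1}X,a y,a^{-1}Y)$, $a=\nu^{1/6}$, together with the constant time change $g_\nu^*{\bf H}_k=a^2{\bf H}$. So the first two of your steps are exactly the paper's, and you are right that all the content sits in the final normalisation.

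That final step, however, does not close as you assert, and this is a genuine gap. The dilation forced by making the quartic monic sends $-\xi X^2$ to $-\xi\nu^{-2/3}X^2$ (after dividing by $a^2$), whereas $(X^2-\tfrac12)^2=X^4-X^2+\tfrac14$ requires this coefficient to be exactly $-1$; that happens only when $\xi=\nu^{2/3}$, a nontrivial algebraic relation between $k_1$ and $k_2$ that fails on a generic regular orbit. The extra conformal freedom you invoke does not rescue the stated formula: taking $(x,X,y,Y)\mapsto(\sigma x,\tau X,\sigma y,\tau Y)$ and dividing the hamiltonian by $\rho$, the three conditions $\sigma^2=\rho$, $\nu\tau^4=\rho$, $\xi\tau^2=\rho$ are solvable (when $\xi>0$) by $\tau^2=\xi/\nu$, $\sigma^2=\rho=\xi^2/\nu$, but then the $Y^2$ coefficient becomes
\begin{equation*}
\frac{\omega\tau^2}{\rho}=\frac{\omega}{\xi}=1+\frac{2c\nu}{\xi},
\end{equation*}
which coincides with the claimed $1+2c\nu^{1/3}$ precisely when $\xi=\nu^{2/3}$. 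So "feeding the dilation into the coefficient of $Y^2$" yields $\alpha^2=\omega/\xi$, not $1+2c\nu^{1/3}$; moreover $\xi>0$ (needed for the double-well separatrix) holds only on part of the orbit space. To be fair, the paper's own one-line proof has the same defect, and the damage is limited: what the rest of the argument uses is only that the normal form is \ref{eq:h2} with some $\alpha=\alpha(k)$ satisfying $\alpha\equiv 1$ iff $a_{13}=0$ and varying real-analytically and non-constantly in $k$ otherwise, and $\alpha^2=\omega/\xi$ has these properties, so Lemma \ref{lem:nov} still applies. But a complete proof of the lemma as literally stated must either restrict to the orbits with $\xi=\nu^{2/3}$ or replace the formula for $\alpha^2$; you should carry out the coefficient bookkeeping explicitly rather than asserting that it "produces $\alpha^2=1+2c\nu^{1/3}$."
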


\begin{proof}
Define the coordinate change $g_{\nu}(x,X,y,Y) = (ax, a^{-1}X, ay,
a^{-1}Y)$ where $a=\nu^{\frac{1}{6}}$. Then $g^*({\bf H}_k) = a^2 {\bf
H}$.
\end{proof}

\begin{lemma} \label{lem:h3}
For all $\epsilon > 0$, the hamiltonian flow of ${\bf H}$ (equation
\ref{eq:h2}) is conjugate to the flow of the vector field
\begin{equation} \label{eq:vf}
\vf_{\epsilon} = \left\{ 
\begin{array}{lclclcl}
\dot{X} &=& x, & \hspace{2mm} & \dot{Y} &=& y,\\
\dot{x} &=& X - 2X^3 + 2\epsilon XY^2, & & \dot{y} &=& \left[ -\alpha^2
  + 2 X^2 \right]\, Y + 2\epsilon Y^3.
\end{array} 
\right.
\end{equation}
\end{lemma}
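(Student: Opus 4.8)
The plan is to prove this by a direct computation in two steps: identify the Hamiltonian vector field of ${\bf H}$ with $\vf_\epsilon$ for the single value $\epsilon=1$, and then use a one-parameter rescaling of the $(Y,y)$-plane to pass from $\vf_1$ to $\vf_\epsilon$ for an arbitrary $\epsilon>0$.

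For the first step I would write out the equations of motion of the Hamiltonian vector field $E_{\bf H}$ of (\ref{eq:h2}) with respect to the canonical Poisson bracket on $T^*\R^2$. A short computation shows that, after the linear reflection $x\mapsto -x$, $y\mapsto -y$ (which only serves to match whatever orientation of the symplectic form one has fixed), $E_{\bf H}$ is precisely the vector field $\vf_\epsilon$ of (\ref{eq:vf}) with $\epsilon=1$. One recognises here the structure announced after Lemma~\ref{lem:h2}: the unperturbed field $\vf_0$ is the unforced Duffing equation $\ddot X = X-2X^3$ --- whose saddle at the origin carries the homoclinic loop $X(t)=\sech t$ --- coupled to the linear equation $\ddot Y = (-\alpha^2+2X^2)Y$, which along that loop is a Hill-type equation with $\sech^2$ potential integrable in Legendre functions.

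For the second step, the key point is that the whole family $\vf_\epsilon$ is obtained from a single member by rescaling the $Y$-direction. Set $\Psi_\sigma(X,x,Y,y)=(X,x,\sigma Y,\sigma y)$ for $\sigma>0$; substituting into (\ref{eq:vf}) one checks at once that $D\Psi_\sigma\cdot\vf_\delta=\vf_{\delta/\sigma^2}\circ\Psi_\sigma$ for every $\delta>0$, since the only way the parameter enters the equations is through the common factor $\sigma^2$ picked up by the $XY^2$-term of $\dot x$ and the $Y^3$-term of $\dot y$. Hence $\Psi_{1/\sqrt{\epsilon}}$ is a diffeomorphism of $T^*\R^2$ conjugating the flow of $\vf_1$ --- equivalently, by the first step, the Hamiltonian flow of ${\bf H}$ --- to the flow of $\vf_\epsilon$, and this works for every $\epsilon>0$, which is the assertion.

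There is no serious obstacle here; the only thing requiring attention is bookkeeping --- deciding which of each pair $x,X$ and $y,Y$ is the configuration variable, since that fixes the signs in Hamilton's equations and hence the precise reflection used in the first step. It is worth remarking that $\Psi_\sigma$ is symplectic only for $\sigma=1$, and correspondingly $\vf_\epsilon$ is a Hamiltonian vector field (for the canonical structure) only for $\epsilon=1$; so for $\epsilon\neq1$ the conjugacy must be verified directly on the vector fields rather than inferred from a symplectic change of coordinates. The purpose of the lemma is to license the introduction of the free parameter $\epsilon$, which will be sent to $0$ when the Poincar\'e--Melnikov integral is evaluated along the Duffing loop to produce the transverse homoclinic points; its content is simply that doing so costs nothing dynamically.
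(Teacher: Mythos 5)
Your proposal is correct and is essentially the paper's own proof, which consists of nothing but the rescaling $h_\epsilon(x,X,y,Y)=(x,X,\sqrt{\epsilon}\,y,\sqrt{\epsilon}\,Y)$ — your $\Psi_{1/\sqrt{\epsilon}}$ written with the parameter inverted — so your verification that $\Psi_\sigma$ carries $\vf_\delta$ to $\vf_{\delta/\sigma^2}$, together with the identification of $E_{\bf H}$ with $\vf_1$, is the whole content. One small caveat to your word ``precisely'': no reflection of $(x,y)$ can make $E_{\bf H}$ literally equal to $\vf_1$, because the $+Y^4$ term in (\ref{eq:h2}) yields $-2Y^3$ in the $\dot y$ equation under either symplectic sign convention, whereas (\ref{eq:vf}) carries $+2\epsilon Y^3$ — a sign discrepancy internal to the paper's own equations, and a harmless one, since the pairing $\langle \d h, {\mathcal Y}\rangle$ in (\ref{eq:dhy}) never sees the $\dot y$ component.
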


\begin{proof}
Introduce the coordinate transformation $h_{\epsilon}(x,X,y,Y) =
(x,X,\sqrt{\epsilon}y,\sqrt{\epsilon}Y)$. 
\end{proof}

\medskip
\noindent{\bf Remark.} It is clear from (\ref{eq:vf}) that the vector
field $\vf_{\epsilon}$ depends on the parameter $\alpha$. Inspection
of the formula for $\alpha$ (lemma \ref{lem:h2} and immediately above)
shows that $\alpha$ is identically unity when the coefficient $a_{13}$
vanishes. This is the case for the standard subriemannian metric,
where $a_{12}=a_{23}=a_{34}=1$ and the other coefficients vanish. Rather than
specializing to $\alpha = 1$, we have elected to carry $\alpha$
through our analysis. The rationale for this will be apparent in
section \ref{ssec:mel}.
\medskip

\section{Analysis of $\vf_{\epsilon}$} \label{sec:anax}
For $\epsilon=0$, the map $h_{\epsilon}$ is singular. However, the
vector field $\vf_0$ is well-defined. We will show that $\vf_0$ has a
normally hyperbolic invariant manifold $S$ whose stable and unstable
manifolds coincide, and that this manifold $S$ persists for
$\epsilon>0$ but the stable and unstable manifolds
$W^{\pm}_{\epsilon}(S)$ no longer coincide. This implies that the
Euler vector field $E_H | {\mathcal O}_k$ has transverse homoclinic
points for all regular coadjoint orbits.

\subsection{The normally hyperbolic manifold $S$}
Inspection of the vector field $\vf_{\epsilon}$ shows that the set 
$$S = \{ (x,X,y,Y)\ :\ x=X=0 \},$$
is invariant for all $\epsilon$. One sees that for $\epsilon=0$, the
vector field is
$$\vf_0 =  \left\{ 
\begin{array}{lclclcl}
\dot{X} &=& x, & \hspace{2mm} & \dot{Y} &=& y,\\
\dot{x} &=& X + O(X^3), & & \dot{y} &=& \left[ -\alpha^2 + 2X^2 \right]\, Y,
\end{array} 
\right. 
$$
which shows that $S$ is normally hyperbolic. Therefore $S$ is
normally hyperbolic for all $\epsilon$ sufficiently small. Since
$\vf_{\epsilon}$ is conjugate to the same vector field for all
non-zero $\epsilon$, one concludes that $S$ is a normally hyperbolic
manifold for {\em all} $\epsilon$.

\subsection{The stable and unstable manifolds of $S$} \label{ssec:wpmS}
The function $h = x^2 + (X^2-\frac{1}{2})^2$ is a first integral of
$\vf_0$. The set $h^{-1}(\frac{1}{4})$ is the stable and unstable
manifold of $S$, which we denote by $W^{\pm}_0(S)$. On $W^{\pm}_0(S)
- S$, the flow of $\vf_0$ satisfies 
\begin{align} \label{eq:separatrix}
\left\{
\begin{array}{lcllcl}
X &=& \pm \sech(t+t_0), & x &=& \mp
\tanh(t+t_0)\sech(t+t_0)^2,\\
Y &=&
c_0 Y_0(t+t_0) + c_1Y_1(t+t_0), & y &=& \dot{Y},
\end{array}
\right.
\end{align}
where $X(0)=\pm \sech(t_0), x(0)=\mp \tanh(t_0)\sech(t_0)^2$ and $Y_j$
solves the initial-value problem
\begin{align} \label{eq:deY}
\left\{
\begin{array}{rclrcl}
\ddot{Y} + \left[ \alpha^2 - 2\sech(t)^2 \right]\, Y &=& 0, \hspace{20mm}
(*)\\
& \\
Y(0) = 1-j,\quad \dot{Y}(0)=j
\end{array}
\right.
\end{align}
while $Y(0)=c_0Y_0(t_0) + c_1Y_1(t_0)$,
$y(0)=c_0\dot{Y}_0(t_0) + c_1\dot{Y}_1(t_0)$. The solutions $Y_j$ are
chosen so that they are even ($j=0$) and odd ($j=1$) functions of
time.

\subsection{The Melnikov function} \label{ssec:mel}
To determine if the flow of $\vf_{\epsilon}$ has transverse homoclinic
points for non-zero $\epsilon$, we appeal to the following theorem.

\begin{thm} \label{thm:melint}
Let $\varphi_{\epsilon} : M \times \R \to M$ be a complete, smooth
flow that depends smoothly on $\epsilon$. Assume that $\varphi_0$
possesses a normally hyperbolic, invariant manifold $S \subset M$, and
that there is a smooth function $h : M \to \R$ such that
\begin{enumerate}

\item the stable and unstable manifolds of $S$ coincide and equal
  $h^{-1}(0)$;
\item $\d h$ does not vanish on $ W^{\pm}_0(S) - S.$

\end{enumerate}
Then, for all sufficiently small non-zero $\epsilon$,
$\varphi_{\epsilon}$ possesses a normally hyperbolic invariant
manifold $S_{\epsilon}$ and the local stable and unstable manifolds of
$S_{\epsilon}$ ($W^+_{\epsilon}(S)$ and $W^-_{\epsilon}(S)$,
respectively) can be written as the graph of a function
$s_{\epsilon}^{\pm} : W^{\pm}_0(S) \to W^{\pm}_{\epsilon}(S)$. The
splitting distance, defined for $p \in W^{\pm}_0(S)$ by
$s_{\epsilon}(p) = h \circ s^+_{\epsilon}(p) - h \circ
s^-_{\epsilon}(p)$, is a smooth function of $\epsilon$ and
$s_{\epsilon}(p) = \epsilon m(p) + O(\epsilon^2)$ where
\begin{equation} \label{eq:melint}
m(p) = \int_{t \in \R} \langle \d h, {\mathcal Y} \rangle \circ
\varphi_0^t(p)\, \, \d t,
\end{equation}
and ${\mathcal Y} = \displaystyle \left. \frac{\partial\ }{\partial
\epsilon} \frac{\partial\ }{\partial t} \right|_{t=\epsilon=0}\,
\varphi^t_{\epsilon} $.
\end{thm}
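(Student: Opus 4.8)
The plan is to follow the Poincar\'e--Melnikov scheme of \cite{MarsdenHolmes:1983a,Robinson:1988a}. First I would invoke the persistence theory for normally hyperbolic invariant manifolds (Fenichel, Hirsch--Pugh--Shub): for $|\epsilon|$ sufficiently small $\varphi_\epsilon$ carries a normally hyperbolic invariant manifold $S_\epsilon$ that is $C^r$-close to $S$ and depends smoothly on $\epsilon$, together with local stable and unstable manifolds $W^\pm_\epsilon(S)$ that are $C^r$-close to $W^\pm_0(S)$ and smooth in $\epsilon$. Because $\d h$ is nowhere zero on $W^\pm_0(S)-S$, the gradient $\nabla h$ (with respect to any auxiliary metric) is transverse to $W^\pm_0(S)$ on each compact subset $K$ bounded away from $S$; by the implicit function theorem every $q\in K$ then has, for small $\epsilon$, a unique nearby point $s^\pm_\epsilon(q)\in W^\pm_\epsilon(S)$ lying on the $\nabla h$-orbit through $q$, with $s^\pm_0=\mathrm{id}$ and $s^\pm_\epsilon$ smooth in $(q,\epsilon)$. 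This produces the graph maps asserted in the statement, and the splitting distance $s_\epsilon(p)=h\circ s^+_\epsilon(p)-h\circ s^-_\epsilon(p)$ is then a smooth function of $\epsilon$ with $s_0\equiv 0$ (the two manifolds coincide and lie in $h^{-1}(0)$ at $\epsilon=0$). Hence $s_\epsilon(p)=\epsilon\,m(p)+O(\epsilon^2)$ with $m(p)=\partial_\epsilon|_{\epsilon=0}\,s_\epsilon(p)=\langle\d h_p,v^+-v^-\rangle$, where $v^\pm=\partial_\epsilon|_{\epsilon=0}\,s^\pm_\epsilon(p)\in T_pM$.

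It remains to identify $m(p)$ with the integral (\ref{eq:melint}); I would do this by the usual variation-of-constants argument along the unperturbed homoclinic orbit $\gamma_0(t)=\varphi^t_0(p)$. Put $g^\pm(t):=\partial_\epsilon|_{\epsilon=0}\,h(\varphi^t_\epsilon(s^\pm_\epsilon(p)))$, so that $g^\pm(0)=\langle\d h_p,v^\pm\rangle$ and $m(p)=g^+(0)-g^-(0)$. Differentiating $h\circ\varphi^t_\epsilon$ once in $t$ and once in $\epsilon$, and using that $\langle\d h,\mathcal Y_0\rangle$ vanishes on the invariant hypersurface $h^{-1}(0)$ — so that along $\gamma_0$ one has $\d(\langle\d h,\mathcal Y_0\rangle)=\rho\,\d h$ for a smooth scalar $\rho(t)$, and in the case of interest here $h$ is an honest first integral of $\varphi_0$, whence $\rho\equiv 0$ — one arrives at the scalar linear equation $\dot g^\pm=\rho(t)\,g^\pm+\langle\d h,\mathcal Y\rangle\circ\gamma_0(t)$, with the \emph{same} inhomogeneity for both signs. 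Imposing the boundary behaviour $g^+(t)\to 0$ as $t\to+\infty$ and $g^-(t)\to 0$ as $t\to-\infty$, integrating, and subtracting then gives $m(p)=g^+(0)-g^-(0)=\int_\R\langle\d h,\mathcal Y\rangle\circ\varphi^t_0(p)\,\d t$ (the sign being fixed by the chosen orientation of $\nabla h$), which is (\ref{eq:melint}).

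The main obstacle is the boundary-behaviour step, and it is exactly here that normal hyperbolicity of $S$ and hypotheses (1)--(2) are essential. One must show: the forward orbit of $s^+_\epsilon(p)$ (resp.\ the backward orbit of $s^-_\epsilon(p)$) converges to $S_\epsilon$ at an exponential rate that is uniform in $\epsilon$; that $h$, hence $g^\pm$, decays along these orbits; that the improper integral in (\ref{eq:melint}) converges; and that the $t\to\pm\infty$ limits may be interchanged with $\partial_\epsilon$. This amounts to controlling the graph representation $s^\pm_\epsilon$ uniformly as $p$ ranges near $S$ and as $t\to\pm\infty$ at the same time, and it is handled as in \cite{Robinson:1988a} by combining the exponential dichotomy supplied by normal hyperbolicity with a rescaling/cut-off near $S$. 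Once (\ref{eq:melint}) is in hand, the remaining assertions of the theorem — existence and normal hyperbolicity of $S_\epsilon$, the graph form of $W^\pm_\epsilon(S)$, and smoothness of $\epsilon\mapsto s_\epsilon(p)$ — are precisely the persistence facts recorded in the first step.
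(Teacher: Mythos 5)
Your proposal is correct and follows essentially the same route as the paper, which itself gives no details and simply defers to the standard invariant-manifold persistence theory and the Melnikov arguments of Holmes--Marsden, Robinson and Gruendler; your sketch is a faithful expansion of exactly those references, including the correct identification of the delicate point (uniform exponential convergence to $S_{\epsilon}$ and the interchange of the $t\to\pm\infty$ limits with $\partial_{\epsilon}$).
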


\begin{proof}
The proof of this theorem is a standard application of invariant
manifold theory plus an adaptation of the proof of the Melnikov
formula~\cite{MarsdenHolmes:1983a,Robinson:1988a,Gruendler}.
\end{proof}

\medskip
\noindent{\bf Remark.} If $m$ changes sign, then, for all $\epsilon
\neq 0$ sufficiently small, the perturbed stable and unstable
manifolds intersect but do not coincide. In our case, a result of
Burns and Weiss implies that the topological entropy is non-zero. If
$m$ has a non-degenerate zero, then the implicit function theorem
implies that, for all $\epsilon \neq 0$ sufficiently small,
$W^+_{\epsilon}(S)$ has a transverse intersection with
$W^-_{\epsilon}(S)$. Note that the intersection of the surface $S$
with a constant energy level is a periodic orbit. Therefore each
trajectory in the intersection is doubly asymptotic to a periodic
orbit in $S$.
\bigskip

For the flow defined by $\vf_{\epsilon}$, we have that
\begin{equation} \label{eq:y}
{\mathcal Y} = 
\left\{ 
\begin{array}{lclclcl}
\dot{X} &=& 0, & \hspace{2mm} & \dot{Y} &=& 0,\\
\dot{x} &=& 2 XY^2, & & \dot{y} &=& 2 Y^3.
\end{array} 
\right.
\end{equation}
Whence
\begin{equation} \label{eq:dhy}
\langle \d h, {\mathcal Y} \rangle = 4 xXY^2,
\end{equation}
since $h = x^2 + (X-\frac{1}{2})^2$. The equations in
(\ref{eq:separatrix}) imply that the Melnikov function is
\begin{equation} \label{eq:melfn}
m(p) = c_0c_1 \times  \int_{\tau \in \R} -4\, \tanh(\tau)\, \sech(\tau)^2\
Y_0(\tau)\, Y_1(\tau)\ \d\tau.
\end{equation}

\medskip
\noindent{\bf Remarks.} (1) The formula for the Melnikov integral
(\ref{eq:melfn}) appears to be a function on $S$ not
$W^{\pm}_0(S)-S$. This does not contradict Theorem
(\ref{thm:melint}). Inspection of the integral (equation
\ref{eq:melint}) shows that $m(\varphi_0^s(p)) = m(p)$ for all $s$ and
$p$. The coordinate system on $W^{\pm}_0(S)-S$ determined by
(\ref{eq:separatrix}) uses time along the flow as one coordinate
($t_0$), so only the other two coordinates, $c_0$ and $c_1$, ought to
appear in the Melnikov function. (2) If we write $m(p) = 2 c_0c_1 \times
I$, where
\begin{equation} \label{eq:I}
I=\int_{0}^{\infty} -4\, \tanh(\tau)\, \sech(\tau)^2\ Y_0(\tau)\,
Y_1(\tau)\ \d\tau,
\end{equation} then $m$ has
non-degenerate zeros along $\{ c_0=0,c_1\neq0\ {\rm or}\
c_1=0,c_0\neq0 \}$, provided that $I\neq 0$.
\medskip

\subsection{The Legendre functions and $I$}
Substitution of $z=\tanh(t)$ transforms the differential equation
(\ref{eq:deY}*) into the Legendre differential equation
\begin{equation} \label{eq:legendre}
(1-z^2)Y'' -2zY' + \left( \nu(\nu+1) - \frac{\mu^2}{1-z^2}
  \right) Y = 0,
\end{equation}
where $\mu=i\alpha$, $\nu=-\frac{1}{2}+\frac{\sqrt{-7}}{2}$ and
$'=\frac{\d\ }{\d z}$. The integral $I$ (equation \ref{eq:I}) is
transformed to
\begin{equation} \label{eq:Imod}
I = \int_{0}^1 z\, U_0(z)\, U_1(z)\, \d z,  
\end{equation}
where $U_j(z) = Y_j(t)$. 

\subsection{The Melnikov function is non-zero: $I\neq0$}
\label{ssec:melfn0}
For the remainder of this note, $q~:~[0,\infty)~\to~\R$ is a
continuously differentiable function such that $\lim_{t\to\infty} q(t)
= 0$ and $\alpha > 0$ is a fixed positive number. The function
$z=z(t)$ is assumed to solve
\begin{equation} \label{eq:dez}
\ddot{z} + [\alpha^2 - q(t)]\, z = 0.
\end{equation}
In analogy with the integral $I$ in equation (\ref{eq:I}), define an
integral
\begin{equation} \label{eq:Iz}
I = \int_{0}^{\infty} \dot{q}(t)\, z_0(t)\, z_1(t)\, \d t
\end{equation}
$I$ is implicitly a function of $\alpha$; one wants to prove that
$I$ can vanish at most countably many times. 
 
Let us first prove the following.
\begin{lemma} \label{lem:bdd}
If $z$ solves equation (\ref{eq:dez}), then $z$ is bounded with
bounded derivative.
\end{lemma}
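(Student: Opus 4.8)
The plan is to regard the equation $\ddot z + [\alpha^2 - q(t)]\,z = 0$ as a perturbation, for large $t$, of the harmonic oscillator $\ddot z + \alpha^2 z = 0$, and to bound the growth of a suitable energy. Since the equation is linear with coefficients continuous on $[0,\infty)$, every solution extends to all of $[0,\infty)$, so it is enough to bound $z$ and $\dot z$ on a terminal interval $[T,\infty)$; on the compact interval $[0,T]$ continuity already gives boundedness. All the work is in an a priori estimate as $t\to\infty$.

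First I would take the energy $E(t) = \dot z(t)^2 + \alpha^2 z(t)^2$. Differentiating and substituting the equation gives $\dot E = 2\dot z\,(\ddot z + \alpha^2 z) = 2\,q(t)\,z(t)\,\dot z(t)$, and the elementary inequality $2|z\dot z| \le \alpha^{-1}(\dot z^2 + \alpha^2 z^2) = \alpha^{-1}E$ then yields $|\dot E(t)| \le \alpha^{-1}|q(t)|\,E(t)$. Excluding the trivial solution $z\equiv 0$ one has $E>0$, so $\frac{\d}{\d t}\log E(t) \le \alpha^{-1}|q(t)|$, and integrating gives $E(t) \le E(0)\,\exp\!\big(\alpha^{-1}\!\int_0^t |q(s)|\,\d s\big)$. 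Once the right-hand side is bounded uniformly in $t$ we are done, because $z(t)^2 \le \alpha^{-2}E(t)$ and $\dot z(t)^2 \le E(t)$ are then both bounded. A slightly cleaner variant works with $\widetilde E(t) = \dot z(t)^2 + (\alpha^2 - q(t))\,z(t)^2$, for which $\dot{\widetilde E} = -\dot q(t)\,z(t)^2$ with no cross term: choosing $T$ so that $q \le \alpha^2/2$ on $[T,\infty)$, which is possible because $q\to 0$, makes $\widetilde E \ge \frac{1}{2}\alpha^2 z^2$ there, hence $|\dot{\widetilde E}| \le 2\alpha^{-2}|\dot q|\,\widetilde E$, and Gr\"onwall on $[T,\infty)$ gives $\widetilde E(t) \le \widetilde E(T)\,\exp\!\big(2\alpha^{-2}\!\int_T^\infty |\dot q(s)|\,\d s\big)$.

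The step that needs care — and the only place where anything more than $q(t)\to 0$ enters — is the finiteness of $\int_0^\infty |q(s)|\,\d s$ (first form) or of $\int_0^\infty |\dot q(s)|\,\d s$ (second form); that is, one really needs $q$ to be integrable, equivalently of bounded variation, on $[0,\infty)$, not merely to vanish at infinity. Some such hypothesis is genuinely necessary: a slowly decaying resonant term like $q(t) = c\,t^{-1}\cos(2\alpha t)$ tends to $0$ at infinity yet, by parametric resonance at the frequency $2\alpha$, admits unbounded solutions. In the present setting it is automatic: the relevant $q$ is $q(t) = 2\sech(t)^2$ (equation (\ref{eq:deY})), which is positive and monotone decreasing on $[0,\infty)$, so $\int_0^\infty |\dot q| = q(0) - \lim_{t\to\infty} q(t) = 2$ and likewise $\int_0^\infty |q| = 2$, and both estimates close. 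I would therefore either strengthen the standing assumption on $q$ to "$q$ (and hence $\dot q$) of bounded variation on $[0,\infty)$", or just carry out the estimate for $q = 2\sech^2$ directly; either way the proof is then routine.
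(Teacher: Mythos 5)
Your proof is correct and follows essentially the same route as the paper: an energy estimate closed by a Gr\"onwall inequality. Your second variant, $\widetilde E=\dot z^2+(\alpha^2-q)z^2$ with $\dot{\widetilde E}=-\dot q\,z^2$, is exactly the paper's identity obtained by integrating $\dot H=qz\dot z$ by parts; the differences are that (i) you keep the $\dot z^2$ term inside the Gr\"onwall estimate and so bound $z$ and $\dot z$ simultaneously, whereas the paper discards it, bounds $z$ first, and then runs a separate and rather sketchier second argument for $w=\dot z$; and (ii) you make explicit that the standing hypotheses ($q\in C^1$ with $q(t)\to 0$) do not suffice, and that one needs $\dot q\in L^1$ (equivalently $q$ of bounded variation), or monotonicity. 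Point (ii) is a genuine observation: the paper's own Gr\"onwall step, which passes from $z^2\le C_1+\tfrac{4}{\alpha^2}\int_0^t(-\dot q)\,z^2$ to $z(t)^2\le C_1\exp\bigl(-\tfrac{4}{\alpha^2}q(t)\bigr)$, tacitly assumes $-\dot q\ge 0$, and your resonant example $q(t)=c\,t^{-1}\cos(2\alpha t)$ correctly shows that decay alone does not guarantee bounded solutions. Since the only $q$ actually used is $2\sech(t)^2$, which is monotone decreasing on $[0,\infty)$, the omission is harmless, but your formulation is the more careful one.
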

\begin{proof}
Define $H = \frac{1}{2} (\alpha^2 z^2 + \dot{z}^2)$. From (\ref{eq:dez}) one
computes that $\dot{H} = qz\dot{z}$. Integrating by parts yields $H =
C_0 + \frac{1}{2} q(t)z(t)^2 + \int_0^t - \frac{1}{2} \dot{q}(s)\, z(s)^2\
\d s$, where $C_0$ is a constant that depends only on $z(0)$ and
$\dot{z}(0)$. Thus
\begin{equation} \label{eq:zbd}
\frac{1}{2} \left[\, \alpha^2 - q(t) \right]\, z(t)^2 \leq C_0 +
\int_0^t - \frac{1}{2}\, \dot{q}(s)\, z(s)^2\ \d s.
\end{equation}
Since $q(t) \to 0$ as $t \to \infty$, there is a $T \geq 0$ such that
$\alpha^2 - q(t) \geq \frac{1}{2}\alpha^2$ for all $t \geq
T$. Therefore, equation (\ref{eq:zbd}) implies that there is a constant
$C_1$ such that for all $t \geq 0$
\begin{equation} \label{eq:zbd2}
z(t)^2 \leq C_1 + \frac{4}{\alpha^2} \times \int_0^t -\dot{q}(s)\, z(s)^2\
\d s
\end{equation}
This is a Gronwall inequality for $u=z^2$. Thus
$$z(t)^2 \leq C_1 \, \exp(-\frac{4}{\alpha^2} q(t) )$$
for all $t \geq 0$. Since $q$ is continuous and converges to $0$ at
infinity, it is bounded. Therefore, $z$ is bounded.

To prove that $w=\dot{z}$ is bounded, define $H=\frac{1}{2} (\alpha^2
w^2 + \dot{w}^2)$. Since $\ddot{w}+[\alpha^2 - q(t)]w = f$, $f=\dot{q}z$,
one computes that $\dot{H} = qw\dot{w} + f\dot{w}$. One can bound
$\int_0^t f(s)\dot{w}(s)\d s$ using that $\ddot{z}=\dot{w}$ and that
$q$ and $z$ are already bounded. One then obtains a Gronwall
inequality like (\ref{eq:zbd2}) for $w(t)^2$.
\end{proof}

\begin{lemma} \label{lem:zero}
If $z_0,z_1$ are solutions to equation (\ref{eq:dez}), then the limit
\begin{equation} \label{eq:limit}
I = -\lim_{t\to \infty} \left[ \dot{z}_0(t)\, \dot{z}_1(t)\, + \alpha^2
  z_0(t)\, z_1(t) \right]
\end{equation}
 exists and equals $W \alpha\cot(B)$ where the angle $B$ is defined
  below, and $W$ is the Wronskian of the solutions $z_0,z_1$.
\end{lemma}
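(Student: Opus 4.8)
The plan is to study the auxiliary quantity $P(t):=\dot z_0(t)\dot z_1(t)+\alpha^2 z_0(t)z_1(t)$, whose limit is $-I$; to show that this limit exists by a single integration by parts; and then to recognise it as an asymptotic phase difference by passing to amplitude--phase coordinates. First I would differentiate $P$ and use $\ddot z_j=-(\alpha^2-q)z_j$: the $\alpha^2$--terms cancel and one is left with $\dot P=q(t)\,\frac{d}{dt}(z_0z_1)$. The Wronskian $W=z_0\dot z_1-\dot z_0 z_1$ is constant, since $\dot W=z_0\ddot z_1-\ddot z_0 z_1=0$ by (\ref{eq:dez}); if $W=0$ the two solutions are proportional, $P$ is a constant multiple of the bounded energy $\frac12(\dot z_0^2+\alpha^2 z_0^2)$, and the statement is to be read in the degenerate form $\cot B=\infty$, so henceforth I assume $W\neq0$.

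Next, to get existence of the limit, I integrate $\dot P=q\,\frac{d}{dt}(z_0z_1)$ by parts, obtaining $P(t)=P(0)+q(t)z_0(t)z_1(t)-q(0)z_0(0)z_1(0)-\int_0^t\dot q(s)\,z_0(s)z_1(s)\,\d s$. By Lemma~\ref{lem:bdd} the solutions $z_0,z_1$ are bounded, so $q(t)z_0(t)z_1(t)\to0$ because $q(t)\to0$; and the improper integral here is precisely the integral (\ref{eq:Iz}), which converges absolutely since $z_0z_1$ is bounded while, in the case of interest, $\dot q(t)=-4\,\sech(t)^2\,\tanh(t)\in L^1[0,\infty)$. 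Hence $P(t)$ tends to a finite limit, and I set $I:=-\lim_{t\to\infty}P(t)$. (For the even/odd normalisation $z_0(0)=1,\dot z_0(0)=0,z_1(0)=0,\dot z_1(0)=1$ the two boundary terms $P(0)$ and $q(0)z_0(0)z_1(0)$ vanish, so this limit is exactly the integral in (\ref{eq:Iz}).)

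To identify the limit I pass to the complex variable $w_j:=\dot z_j+i\alpha z_j$, so that $|w_j|^2=\dot z_j^2+\alpha^2 z_j^2$, $\dot w_j=i\alpha w_j+q z_j$, and directly $w_0\bar w_1=P+i\alpha W$. Writing $w_j=R_je^{i\theta_j}$ --- legitimate for all $t$, since a nontrivial solution never has $z_j=\dot z_j=0$ --- gives $P=R_0R_1\cos(\theta_0-\theta_1)$ and $\alpha W=R_0R_1\sin(\theta_0-\theta_1)$, whence $P=\alpha W\cot(\theta_0-\theta_1)$. Since $W\neq0$ while $R_0^2,R_1^2$ are bounded by Lemma~\ref{lem:bdd}, the quantity $\sin(\theta_0-\theta_1)=\alpha W/(R_0R_1)$ is bounded away from $0$, so the continuous phase $\theta_0(t)-\theta_1(t)$ remains in a single interval on which $\cot$ is a homeomorphism onto a compact interval; the convergence of $P$ from the previous step therefore forces $\theta_0(t)-\theta_1(t)$ to converge. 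Defining $B:=\lim_{t\to\infty}\big(\theta_1(t)-\theta_0(t)\big)$ --- the difference of asymptotic phases of the two solutions --- we get $I=-\lim P=-\alpha W\cot(-B)=W\alpha\cot B$, with $\sin B\neq0$, which is the assertion.

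I expect the main obstacle to be the convergence of the oscillatory integral $\int_0^\infty\dot q\,z_0z_1\,\d t$ (equivalently, the convergence of $P(t)$), i.e.\ Step~2. For $q=2\sech^2 t$ this is immediate from $\dot q\in L^1$ together with the boundedness of $z_0,z_1$; for a general $C^1$ potential decaying to $0$ one must instead exploit that $z_0z_1$ carries a mean--zero oscillation of frequency $\approx2\alpha$ (visible from the phase equation $\dot\theta_j=\alpha-\alpha q z_j^2/|w_j|^2$, so that the phase of $w_j$ rotates at rate $\alpha$ up to an $O(q)$ correction) and combine a Dirichlet / second--mean--value estimate with the elementary bounds already established. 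Everything else --- the cancellation yielding $\dot P=q\,\frac{d}{dt}(z_0z_1)$, the constancy of $W$, and the amplitude--phase bookkeeping that produces $B$ --- is routine.
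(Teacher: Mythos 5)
Your proof is correct for the case the paper actually needs, and it takes a genuinely different route. The paper establishes the existence of the limit by tracking zeros: Sturm comparison plus a compactness argument show that $z_j(t+2n\pi/\alpha)$ converges in $C^1$ to a sinusoid $a_j\sin(\alpha t-\phi_j)$, and the limits of $\dot z_0\dot z_1+\alpha^2z_0z_1$ and of the Wronskian are then read off from trigonometric addition formulas, with $B=\phi_1-\phi_0$. You instead (i) prove existence by running the integration by parts of Lemma \ref{lem:I0} in reverse, reducing convergence of $P(t)=\dot z_0\dot z_1+\alpha^2 z_0z_1$ to absolute convergence of $\int_0^\infty\dot q\,z_0z_1\,\d t$, and (ii) identify the limit via the exact identity $w_0\bar w_1=P+i\alpha W$ in the variables $w_j=\dot z_j+i\alpha z_j$, which gives $P=\alpha W\cot(\theta_0-\theta_1)$ pointwise rather than only asymptotically; the convergence of the phase difference is then \emph{deduced} from the convergence of $P$ together with the lower bound $|\sin(\theta_0-\theta_1)|\geq\alpha|W|/\sup(R_0R_1)>0$. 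Step (ii) is cleaner than the paper's argument and makes the existence of the asymptotic phase transparent. The price is in step (i): your existence argument requires $\dot q\in L^1$, whereas the lemma is stated for an arbitrary $C^1$ potential tending to zero; you flag this honestly, it is harmless for the application ($q=2\sech(t)^2$), and in fairness the paper's own assertion that the full sequence $\phi_n$ converges also tacitly uses integrability of $q$ rather than mere decay. Two bookkeeping remarks: since $\theta_j\sim\alpha t-\phi_j$, your $B=\lim(\theta_1-\theta_0)$ is the negative of the paper's $B=\phi_1-\phi_0$, so the two formulas $W\alpha\cot B$ differ by the orientation convention for $B$ (your chain of equalities is internally consistent, and the lemma leaves $B$ to be defined in the proof); and your reading of $W=0$ as the degenerate case $\cot B=\infty$ is a sensible treatment of a case the paper leaves implicit (in the application $W=1$).
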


\begin{proof}
Let $z$ be any solution of (\ref{eq:dez}). Let $t_n$ be the sequence
of zeros of $z(t)$, indexed in increasing order. Let $\phi_n = \alpha
t_n \bmod 2\pi$ and $a_n = \dot{z}(t_n)$. There is a sequence $n_k$
such that $\phi_{n_k} \to \phi \bmod 2\pi$ and $a_{n_k} \to a>0$. The
former follows by compactness of $\R/2\pi\Z$ and the latter because
$\dot{z}$ is bounded.

Since $q(t) \to 0$, the Sturm comparison theorem implies that the
sequence $\phi_n$ converges to $\phi$ and $t_{n+1}-t_n$ converges to
$\pi/\alpha$~\cite{Zettl}. One sees that positive and negative zeros
of $z$ must alternate for all $n$. Hence, without loss of generality,
one may assume that $\dot{z}(t_{2n}) \to a$ and $\dot{z}(t_{2n+1}) \to
-a$. The continuous dependence of solutions on initial data therefore
implies that $z^n(t) := z(t+\frac{2n\pi}{\alpha})$ converges in the
weak Whitney $C^1$ topology to $a \sin(\alpha t-\phi)$. In particular,
$z^n(t)$ converges uniformly to $a \sin(\alpha t-\phi)$ for $t \in
[0,4\pi/\alpha]$.

To apply these observations to the limit (\ref{eq:limit}), let $N$ be
sufficiently large so that the $n$-th and $n+1$-th zeros of both $z_0$
and $z_1$ are at most $\frac{2\pi}{\alpha}$ apart for all $n\geq
N$. Let $s\in [\frac{2n\pi}{\alpha},\frac{2(n+1)\pi}{\alpha}]$ and write
$s=t+\frac{n\pi}{\alpha}$ so that $t\in [0,\frac{2\pi}{\alpha}]$. Then
\begin{align*}
&\ \ \ \ |\dot{z}_0(s) \dot{z}_1(s) + \alpha^2 z_0(s)z_1(s) - \alpha^2 a_0a_1
\cos(\phi_1-\phi_0)|\\ 
&\leq |\dot{z}^n_0(t) \dot{z}^n_1(t) - \alpha^2
a_0a_1 \cos(\alpha t - \phi_0) \cos(\alpha t - \phi_1)| +\\
&\ \ \ \ \ \alpha^2 | z^n_0(t)z^n_1(t) - a_0a_1 \sin(\alpha t - \phi_0)
\sin(\alpha t - \phi_1)|.
\end{align*}
If $s \to \infty$, then $n\to \infty$. The above-mentioned uniform
convergence for $t\in [0,2\pi/\alpha]$ shows that the limit
(\ref{eq:limit}) exists and equals $A\cos(B)$ where $A=\alpha^2
a_0a_1$ and $B=\phi_1-\phi_0 \bmod 2\pi$.

On the other hand, the Wronskian $W$ of $z_0,z_1$ is
constant and
$$\xymatrix{W = z_0(t)\dot{z}_1(t) - z_1(t)\dot{z}_0(t)\ \ar[rr]^{t \to
  \infty} && \ \alpha a_0a_1 \sin(\phi_1-\phi_0),}$$ by the
  same argument as above. Therefore $I/W = \alpha \cot(B)$.
\end{proof}

\medskip
\noindent{\bf Remarks.} (1) The angle $B$ has the following
interpretation which emerges from the proof of lemma
(\ref{lem:zero}). The zeros of solutions to (\ref{eq:dez}) are
asymptotically $\pi/\alpha$ apart, and the zeros of linearly
independent solutions are interlaced. The angle $B$ is defined so that
$B/\alpha \bmod \pi/\alpha$ is asymptotically the time between
consecutive zeros of the linearly independent solutions.  Figure
\ref{fig:1}, left, plots $B$ as a function of $\alpha$ for the
solutions $z_j=Y_j$ to the initial-value problem (\ref{eq:deY}). One
expects that as $\alpha \to \infty$, the solutions should converge
quite quickly to $\cos$ and $\sin$, whence $B$ should approach
$\frac{\pi}{2}$. The figure captures this behaviour quite nicely.  (2)
The function $I = W\alpha \cot(B)$ from lemma~\ref{lem:zero} can be
computed numerically. The Sturm comparison theorem implies that the
$n$-th zero $t_n$ of a solution $z$ satisfies $\pi/\alpha < t_{n+1} -
t_n < \pi/\alpha \times (1 + q/\alpha^2)$ if $|q(t)| < \alpha^2/2$ for
all $t > t_n$. If $q$ goes to zero sufficiently fast, one can
numerically compute the first several zeros and obtain a reasonably
accurate estimate of $B$. Figure \ref{fig:1}, right, shows the graph
of $I$ for $q=2\sech(t)^2$.
\medskip

\begin{lemma} \label{lem:I0}
If $z_0, z_1$ are solutions to (\ref{eq:dez}) such that $\dot{z}_0$
and $z_1$ vanish at $t=0$, then the integral
$$
I = \int_{0}^{\infty} \dot{q}(t)\, z_0(t)\, z_1(t)\, \d t \leqno({\rm \ref{eq:Iz}})
$$
exists and equals $W \alpha\cot(B)$ where the angle $B$ is described
in Lemma \ref{lem:zero}, and $W$ is the Wronskian of the solutions
$z_0,z_1$.
\end{lemma}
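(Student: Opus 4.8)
The plan is to deduce Lemma~\ref{lem:I0} from Lemma~\ref{lem:zero} by a single integration by parts, exploiting an exact differential identity that holds because \emph{both} $z_0$ and $z_1$ satisfy~(\ref{eq:dez}). Differentiating the bracketed quantity appearing in Lemma~\ref{lem:zero} and substituting $\ddot z_j = -[\alpha^2 - q(t)]z_j$, the two $\alpha^2$ cross-terms cancel and one is left with
\[
\frac{\d\ }{\d t}\left[\dot z_0\,\dot z_1 + \alpha^2 z_0 z_1\right] = q(t)\,\frac{\d\ }{\d t}\bigl(z_0 z_1\bigr).
\]

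First I would integrate this identity over $[0,T]$ and integrate the right-hand side by parts, obtaining
\[
\int_0^T \dot q(t)\, z_0(t)\, z_1(t)\,\d t = \Bigl[\, q\,z_0 z_1 - \dot z_0\,\dot z_1 - \alpha^2 z_0 z_1 \,\Bigr]_0^{T}.
\]
Next I would note that the hypotheses $\dot z_0(0)=0$ and $z_1(0)=0$ kill every term of the boundary contribution at $t=0$. For the contribution at $t=T$: Lemma~\ref{lem:bdd} makes $z_0,z_1$ bounded, so $q(T)z_0(T)z_1(T)\to 0$ since $q(T)\to 0$; and Lemma~\ref{lem:zero} asserts that $\dot z_0(T)\dot z_1(T)+\alpha^2 z_0(T)z_1(T)$ converges as $T\to\infty$, with limit $-W\alpha\cot(B)$. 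Hence the right-hand side converges, so the improper integral exists, and passing to the limit yields $I = W\alpha\cot(B)$. (If $z_0$ or $z_1$ is the trivial solution then $I$ and $W$ both vanish, so we may assume both are nontrivial, which is the setting of Lemma~\ref{lem:zero}.)

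I do not expect a genuine obstacle here: the substantive work — the Sturm-comparison/oscillation analysis that produces the angle $B$ and the limit — is already done in Lemma~\ref{lem:zero}, and what remains is bookkeeping. The two points that need a little care are (i) verifying that $\frac{\d}{\d t}[\dot z_0\dot z_1 + \alpha^2 z_0 z_1]$ really collapses to $q\,\frac{\d}{\d t}(z_0 z_1)$, i.e.\ that the $\alpha^2\dot z_0 z_1$ and $\alpha^2 z_0\dot z_1$ terms cancel exactly, and (ii) matching the sign of the boundary evaluation at $T$ with the sign convention for $I$ in~(\ref{eq:limit}). It is also worth remarking that this computation explains why the even/odd normalization of $Y_0,Y_1$ in~(\ref{eq:deY}) is the convenient one: it is precisely what forces the $t=0$ boundary term to vanish.
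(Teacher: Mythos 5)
Your proof is correct and follows essentially the same route as the paper: an integration by parts combined with the exact identity $\frac{\d\ }{\d t}\left[\dot z_0\dot z_1+\alpha^2 z_0 z_1\right]=q\,\frac{\d\ }{\d t}(z_0z_1)$, the vanishing of the $t=0$ boundary terms via $\dot z_0(0)=z_1(0)=0$, boundedness from Lemma~\ref{lem:bdd}, and the limit from Lemma~\ref{lem:zero}. The only (harmless) difference is organizational — you establish the differential identity first and integrate by parts afterwards over $[0,T]$, which if anything makes the existence of the improper integral slightly more explicit than in the paper's version.
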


\begin{proof}
By lemma \ref{lem:bdd}, both solutions are bounded, so one can apply
integration by parts to the integral. This yields
\begin{align*}
I &= q(0) z_0(0) z_1(0) - \int_{0}^{\infty} q(t)\, \left[
  \dot{z}_0(t)\, z_1(t)\, + z_0(t)\, \dot{z}_1(t)\, \right]\, \d t,\\
&= -\int_{0}^{\infty} q(t)\, \left[
  \dot{z}_0(t)\, z_1(t)\, + z_0(t)\, \dot{z}_1(t)\, \right]\, \d t
\end{align*}
since $z_1$ vanishes at $t=0$.

From (\ref{eq:dez}), it is known that $q(t)z_0(t) = \ddot{z}_0(t)+\alpha^2
  z_0(t)$ and similarly for $z_1$. Therefore
\begin{align*}
I &= -\int_{0}^{\infty} \frac{\d\ }{\d t}\,
  \left[ \dot{z}_0(t)\, \dot{z}_1(t)\, + \alpha^2 z_0(t)\, z_1(t)\,
  \right]\, \d t,\\
&= - \lim_{t\to \infty} \left[ \dot{z}_0(t)\,
    \dot{z}_1(t)\, + \alpha^2 z_0(t)\, z_1(t) \right], \hspace{5mm}
  \textrm{since}\ \dot{z}_0(0)=0=z_1(0),\\
&= W \alpha \cot(B) \hspace{45mm} {\rm by\ lemma\ (\ref{lem:zero}).}
\end{align*}
\end{proof}

\begin{lemma} \label{lem:nov}
Assume that there exists $C, \lambda > 0$ such that $|q(t)| < C
e^{\lambda t}$ for all $t > 0$. Then the integral $I=I(\alpha)$ is a
holomorphic function of $\alpha$ on the strip $|\im \alpha| < \lambda$
about the real line.

Consequently, if $q$ is an even, monotone function, then $I$ vanishes
countably many times at most.
\end{lemma}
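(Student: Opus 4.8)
The plan is to obtain the holomorphy of $I$ on the strip from differentiation under the integral sign --- which first requires an exponential a priori bound on the solutions of (\ref{eq:dez}), uniform on compact subsets of the strip --- and then to deduce the zero-counting statement from the identity theorem together with a single non-vanishing check, which is where the monotonicity of $q$ enters.

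For the holomorphy I would argue in three steps. (i) For each fixed $t\ge 0$ the coefficients of (\ref{eq:dez}) are polynomials in $\alpha^2$, so by the standard theorem on analytic dependence of solutions of a linear ODE on a parameter, $\alpha\mapsto\bigl(z_0(t,\alpha),z_1(t,\alpha),\dot z_0(t,\alpha),\dot z_1(t,\alpha)\bigr)$ extends to an entire function of $\alpha$. (ii) Rewriting (\ref{eq:dez}) in integrated form,
\begin{equation*}
z(t)=z(0)\cos(\alpha t)+\frac{\dot z(0)}{\alpha}\sin(\alpha t)+\frac1\alpha\int_0^t\sin\bigl(\alpha(t-s)\bigr)\,q(s)\,z(s)\,\d s,
\end{equation*}
and using $|\cos(\alpha\tau)|,|\sin(\alpha\tau)|\le e^{|\im\alpha|\,\tau}$, the function $u(t)=e^{-|\im\alpha|\,t}|z(t)|$ obeys a Gronwall inequality with kernel $|q|/|\alpha|$; since the decay hypothesis on $q$ makes $q$ integrable, this yields $|z_j(t,\alpha)|\le C_1\,e^{|\im\alpha|\,t}$ with $C_1$ locally bounded in $\alpha$, and differentiating the integral equation gives the matching bound for $\dot z_j$. (iii) Hence $|\dot q(t)\,z_0(t,\alpha)\,z_1(t,\alpha)|\le C_2\,|\dot q(t)|\,e^{2|\im\alpha|\,t}$, which by the exponential decay of $q$ (hence of $\dot q$) is dominated by an integrable function of $t$, uniformly for $\alpha$ in compact subsets of $\{|\im\alpha|<\lambda\}$ --- and this is exactly what pins the strip down to width $\lambda$. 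So $I(\alpha)$ is the locally uniform limit of the holomorphic functions $\alpha\mapsto\int_0^T\dot q\,z_0\,z_1\,\d t$, and Morera's theorem concludes.

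For the zero-counting statement: the strip is open and connected and contains the real interval over which $\alpha$ ranges, so by the identity theorem the zero set of $I$ is either the whole strip or discrete, hence at most countable, in particular its real part is at most countable; so it suffices to produce one point of the strip where $I\ne 0$. I would evaluate at $\alpha=i\beta$ with $0<\beta<\lambda$. Suppose $q$ is non-increasing, so $q\ge 0$ and $\dot q\le 0$ on $(0,\infty)$, and $q\not\equiv 0$ since $q$ is non-constant (the non-decreasing case is analogous). At $\alpha=i\beta$, (\ref{eq:dez}) reads $\ddot z=[\beta^2+q(t)]\,z$ with $\beta^2+q\ge\beta^2>0$; with $z_0(0)=1,\dot z_0(0)=0$ and $z_1(0)=0,\dot z_1(0)=1$, a convexity argument (while $z_j>0$ one has $\ddot z_j>0$, hence $\dot z_j$ increasing and positive, hence $z_j$ increasing and positive) gives $z_0,z_1,\dot z_0,\dot z_1>0$ on $(0,\infty)$. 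Then $\dot q(t)\,z_0(t)\,z_1(t)\le 0$ throughout and $<0$ on the positive-measure set $\{\dot q<0\}$, so $I(i\beta)=\int_0^\infty\dot q\,z_0\,z_1\,\d t<0$ (the integral converges by the bound in step (ii), since $\beta<\lambda$). Hence $I\not\equiv 0$ and the lemma follows.

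The hard part will be step (ii): one must control the $\alpha$-dependent exponential growth $e^{|\im\alpha|\,t}$ of the solutions of (\ref{eq:dez}) for complex $\alpha$ (Lemma \ref{lem:bdd} handles only real $\alpha$, where one has genuine boundedness) and balance it against the decay of $\dot q$ so that the tails of $I$ converge locally uniformly; it is this balance that determines $\lambda$. By comparison, the exclusion of $I\equiv 0$ is cheap once holomorphy is in hand, because on the imaginary axis both the perturbed dynamics and the pure exponential reference are manifestly convex, so $z_0z_1$ keeps a fixed sign and the one-signedness of $\dot q$ coming from monotonicity delivers a strictly signed integral.
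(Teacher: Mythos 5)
Your proposal is correct and follows essentially the same route as the paper: holomorphy of $I(\alpha)$ on a strip via analytic parameter dependence plus exponential growth control of the solutions for complex $\alpha$, then the identity theorem combined with a single sign-definite evaluation in which monotonicity of $q$ makes the integrand one-signed via a convexity argument. The only differences are that you evaluate at $\alpha=i\beta$ where the paper evaluates at $\alpha=0$ (the same positivity mechanism, $\ddot z=(\mathrm{positive})\,z$, drives both), and that your bound $|z_0z_1|\lesssim e^{2|\im \alpha|\,t}$ honestly yields only the substrip $|\im \alpha|<\lambda/2$ --- a harmless discrepancy (shared with the paper's own sketch) since any connected neighbourhood of the real axis suffices for the countability conclusion.
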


\begin{proof}
A solution $z=z(t;\alpha)$ to (\ref{eq:dez}) is a holomorphic function
of $\alpha$ for each fixed $t$ \cite{Zettl}. For large $t$ and $|\im
\alpha| < \lambda$, the solution $z=z(t;\alpha)$ is equal to
$\cos(\alpha t + \phi)$ plus a term that grows slower than $e^{\lambda
  t}$. This implies, by the residue formula, that $I=I(\alpha)$ is
holomorphic provided that $|\im \alpha| < \lambda$.

When $\alpha=0$ and $q$ is even, the even and odd solutions to
(\ref{eq:dez}) do not change sign. Therefore, if $q$ is monotone, then
the integrand defining $I(0)$ does not change sign, so $I(0)\neq
0$. Thus, $I$ can vanish at most countably many times on the strip
$|\im \alpha| < \lambda$.
\end{proof}

\begin{proof}[Theorem \ref{thm:main}]
If $a_{13} \neq 0$ -- whence $c \neq 0$ in Lemma \ref{lem:h2} --, then
lemma \ref{lem:nov} shows that the hamiltonian flow of $H$ (equation
\ref{eq:h}) on all but countably many coadjoint orbits in $\t^*_4$ has
a horseshoe.  This proves the main result, Theorem \ref{thm:main}.
\end{proof}

\section{The degenerate case when $\alpha \equiv 1$} \label{sec:deg} 
If $a_{13} = 0$, as occurs for the Carnot subriemannian metric of
\cite{Montgomeryetal}, then $\alpha \equiv 1$ and lemma \ref{lem:nov}
cannot be applied. We investigate two distinct ways to address this
problem. The first is direct and numerical; the second leads to some
further insight into the integral $I$. 

\subsection{Numerical evidence} \label{ssec:num}
In this case, figure \ref{fig:1} indicates that $I(1)$ is
approximately $-2.75$. Table \ref{tab:1} shows the results of a
numerical computation of $I(1)$ with varying step sizes. It is clear
from this table that $I(1)=-2.76$ to two decimal places. 

To estimate the error in the computations, one uses the fact that the
differential equations (\ref{eq:dez}) are hamiltonian with the
hamiltonian
\begin{equation} \label{eq:Hz}
\H = \frac{1}{2}p^2 + \frac{1}{2}\left[ \alpha^2 - q(\tau)  \right]\,
z^2 + u, 
\end{equation}
where $p,z$ and $u,\tau$ are canonically conjugate variables (along
solutions, $\tau = \tau_0+t$, so it is a pseudo-time). Since $\H$ is
preserved along solutions to (\ref{eq:dez}), the maximum deviation of
$\H$ along a numerical solution provides an estimate of the upper
bound of the error in the solutions $z_0, z_1$.

\subsection{Qualitative evidence} \label{ssec:qual} 
As explained in the Remark in subsection \ref{ssec:melfn0}, one may
compute $I$ as a function of $\alpha$ by computing the phase angle
$B$. Figure \ref{fig:1} graphs $B$ and $I$ versus $\alpha$. This
figure shows that $I(1)$ does not vanish.

\begin{sidewaystable}
\centering
\caption{The numerical calculation of $I$ with $\alpha=1$. }  \label{tab:1}
\begin{tabular}{|llllllll|} 
\hline\hline $h$ & $I$ & $\H_{0,min}$ & $\H_{0,max}$ & $\H_{1,min}$ &
$\H_{1,max}$ & $\H_{0,max}-\H_{0,min}$ & $\H_{1,max}-\H_{1,min}$
\\ \hline\hline

\rmt{ 0.5 } & \rmt{ -2.76812630 } & \rmt{ -0.5 } & \rmt{ -0.49025150 } & \rmt{ 0.49833857 } & \rmt{ 0.51067514 } & \rmt{ 0.00974849 } & \rmt{ 0.01233657}\\
\rmt{ 0.25 } & \rmt{ -2.76366763 } & \rmt{ -0.5 } & \rmt{ -0.49944022 } & \rmt{ 0.49992738 } & \rmt{ 0.50063022 } & \rmt{ 0.00055977 } & \rmt{ 0.00070283}\\
\rmt{ 0.125 } & \rmt{ -2.76340793 } & \rmt{ -0.5 } & \rmt{ -0.49996571 } & \rmt{ 0.49999582 } & \rmt{ 0.50003878 } & \rmt{ 3.42847126 $\times 10^{-5}$  } & \rmt{ 4.29572646 $\times 10^{-5}$ }\\
\rmt{ 0.0625 } & \rmt{ -2.76339200 } & \rmt{ -0.5 } & \rmt{ -0.49999786 } & \rmt{ 0.49999974 } & \rmt{ 0.50000241 } & \rmt{ 2.13207876 $\times 10^{-6}$  } & \rmt{ 2.67223559 $\times 10^{-6}$ }\\
\rmt{ 0.03125 } & \rmt{ -2.76339101 } & \rmt{ -0.5 } & \rmt{ -0.49999986 } & \rmt{ 0.49999998 } & \rmt{ 0.50000015 } & \rmt{ 1.33088170 $\times 10^{-7}$  } & \rmt{ 1.66835298 $\times 10^{-7}$ }\\
\rmt{ 0.015625 } & \rmt{ -2.76339095 } & \rmt{ -0.5 } & \rmt{ -0.49999999 } & \rmt{ 0.49999999 } & \rmt{ 0.50000000 } & \rmt{ 8.31541421 $\times 10^{-9}$  } & \rmt{ 1.04238692 $\times 10^{-8}$ }\\
\rmt{ 0.0078125 } & \rmt{ -2.76339094 } &
\rmt{ -0.5 } & \rmt{ -0.49999999 } &
\rmt{ 0.49999999 } & \rmt{
  0.50000000 } & \rmt{ 5.19672801 $\times 10^{-10}$ } &
\rmt{ 6.51456639 $\times 10^{-10}$} \\\hline
\multicolumn{8}{p{16cm}}{$ $}\\
\multicolumn{8}{p{16cm}}{Solutions to the hamiltonian equations of $\H$ are computed
  with the Forest-Ruth $4$-th order symplectic integrator
  \cite{SanzSerna} and initial conditions
  $z(0)=j,\dot{z}(0)=1-j,\tau(0)=0,u(0)=0$ for $j=0,1$. The maximum
  (resp. minimum) value of $\H$ along the $j$-th numerical solution
  over the interval $[0,35]$ is indicated by $\H_{j,max}$
  (resp. $\H_{j,min}$). The integral $I$ is computed by
\[
\hspace{-4cm} I^h = h \times \sum_{i=0}^N \dot{q}(t_i)\, z^h_0(t_i)\, z^h_1(t_i)
\]
where $z^h_j$ is the computed solution with step size $h$, $N=35/h$,
$t_i = i \times h$ and $q(t) = 2\sech(t)^2$.
Files at \url{http://www.maths.ed.ac.uk/~lbutler/t4.html}}
\end{tabular}
\end{sidewaystable}

{
\def\figa{{
\def\orbitpica{\resizebox{5cm}{!}{\includegraphics{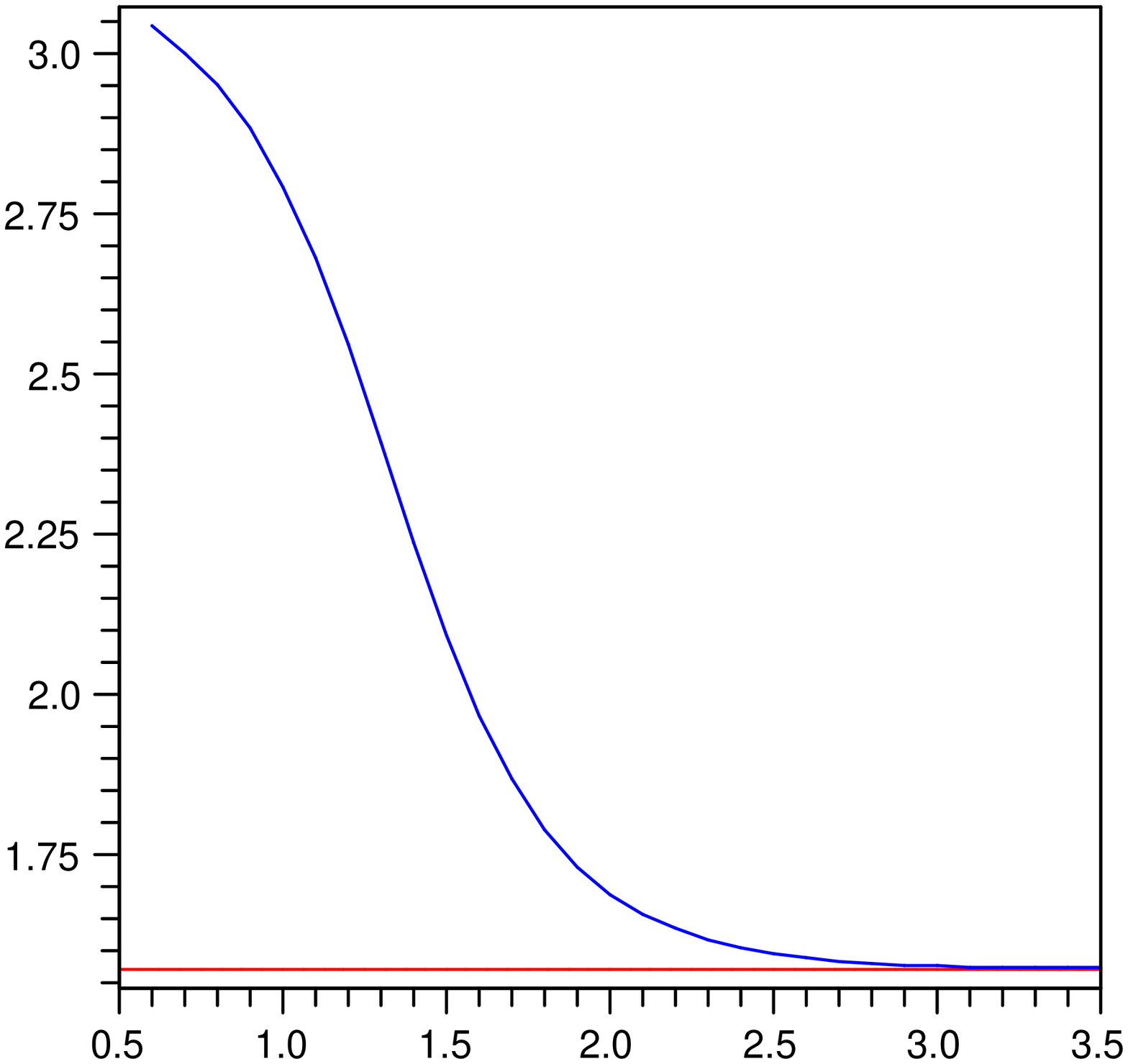}}}
\xy
\xyimport(4,4){\orbitpica}
,(2.,3.)*!L\txt{$B(\alpha)$}*=0{},{\ar-(.5,.5)}
,(1.2,1.1)*!L\txt{$\frac{\pi}{2}$}*=0{},{\ar-(.1,.4)}
,(2.2,.1)*!L\txt{$\alpha$}*=0{}
,(-0.1,2.5)*!L\txt{$B$}*=0{}
\endxy
}}
\def\figb{{
\def\orbitpicb{\resizebox{5cm}{!}{\includegraphics{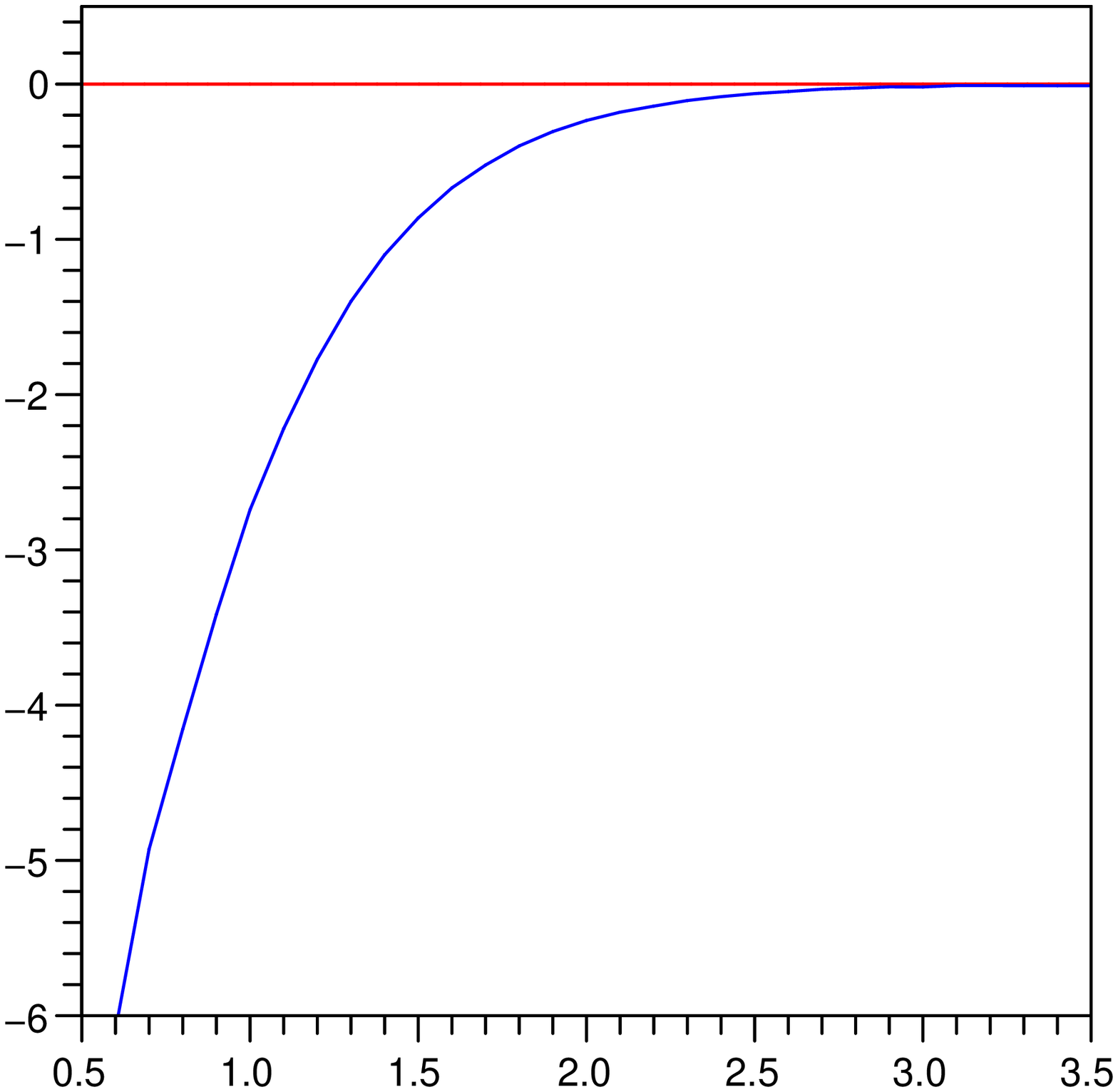}}}
\xy
\xyimport(4,4){\orbitpicb}
,(2.,3.)*!L\txt{$\alpha\cot(B)$}*=0{},{\ar-(0.5,0.0)}
,(2.2,.0)*!L\txt{$\alpha$}*=0{}
,(-0.1,2.5)*!L\txt{$I$}*=0{}
\endxy
}}
\begin{center}
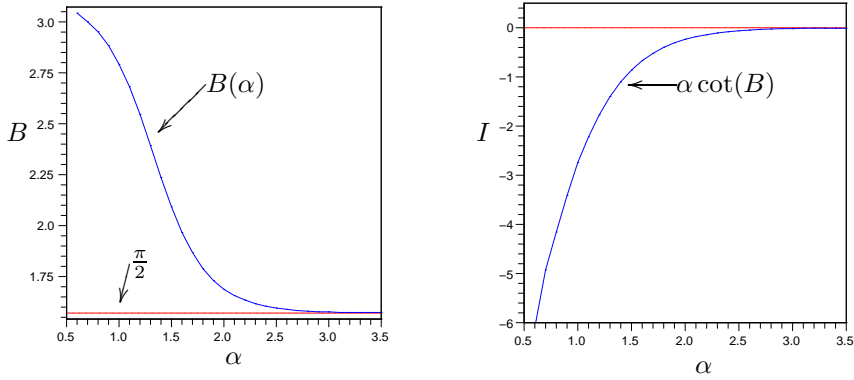
\begin{figure}[htb]
$$
\xymatrix{
\figa & \figb
}
$$
\caption{Left: Phase angle $B$ vs. $\alpha$; Right: $I=\alpha\cot(B)$
  vs. $\alpha$. Both plots use the solutions $z_j = Y_j$ of
  (\ref{eq:deY}) with $q(t) = 2\sech(t)^2$. These solutions are
  computed numerically in Maple using the $4$-th order Runge-Kutta
  method; the zeros are located by interval halving.} \label{fig:1}
\end{figure}
\end{center}
}

\end{document}